\title*{An interesting class of Hankel determinants}
\author{Johann Cigler\thanks{Email: johann.cigler@univie.ac.at}\; \& Mike Tyson\thanks{Email: mgtyson66@gmail.com}}
\date{}
\theoremstyle{plain}
\newtheorem{thm}{Theorem}[section]
\newtheorem{lem}[thm]{Lemma}
\newtheorem{defn}[thm]{Definition}
\newtheorem{prop}[thm]{Proposition}
\newcommand{\ZZ}{\mathbb{Z}}
\newcommand{\eps}{\varepsilon}
\DeclareMathOperator{\adj}{adj}
\DeclareMathOperator{\rk}{rk}
\begin{document}

\maketitle

\abstract{For small $r$ the Hankel determinants $d_r(n)$ of the sequence $\left({2n+r\choose n}\right)_{n\ge 0}$ are easy to guess and show an interesting modular pattern. For arbitrary $r$ and $n$ no closed formulae are known, but for each positive integer $r$ the special values $d_r(rn)$, $d_r(rn+1)$, and $d_r(rn+\lfloor\frac{r+1}{2}\rfloor)$ have nice values which will be proved in this paper.}

\section{Introduction}
\label{sec:1}

Let $(a_n)_{n\ge 0}$ be a sequence of real numbers with $a_0=1$. For each $n$ consider the Hankel determinant
\begin{equation}
H_n=\det(a_{i+j})_{i,j=0}^{n-1}.
\end{equation}
We are interested in  the sequence $(H_n)_{n\ge 0}$ for the sequences $a_{n,r}={2n+r\choose n}$ for some $r\in\mathbb{N}$. For $n=0$ we let $H_0=1$.

Let
\begin{equation}
d_r(n)=\det\left({2i+2j+r\choose i+j}\right)_{i,j=0}^{n-1}.
\end{equation}
For $r=0$ and $r=1$ these determinants are well known and satisfy $d_0(n)=2^{n-1}$ and $d_1(n)=1$ for $n>0$. E{\u g}ecio{\u g}lu, Redmond, and Ryavec \cite{EGECIOGLU201077} computed $d_2(n)$ and $d_3(n)$ and stated some conjectures for $r>3$.

Many of these determinants are easy to guess and show an interesting modular pattern. For example
\begin{align}
(d_0(n))_{n\ge 0}&=(1,1,2,2^2,2^3,\dots),\\
(d_1(n))_{n\ge 0}&=(1,1,1,1,1,\dots),\\
(d_2(n))_{n\ge 0}&=(1,1,\mathbin{\color{red}-1,-1,}1,1,\mathbin{\color{red}-1,-1,}\dots),\\
(d_3(n))_{n\ge 0}&=(1,1,-4,\mathbin{\color{red}3,3,-8,}5,5,-12,\mathbin{\color{red}7,7,-16,}\dots),\\
(d_4(n))_{n\ge 0}&=(1,1,-8,8,\mathbin{\color{red}1,1,-16,16,}1,1,-24,24,\dots),\\
(d_5(n))_{n\ge 0}&=(1,1,-13,-16,61,\mathbin{\color{red}9,9,-178,-64,370,}25,25,-695,-144,1127,\dots)
\end{align}
These and other computations suggest the following facts:
\begin{align}
&d_{2k+1}((2k+1)n)=d_{2k+1}((2k+1)n+1)=(2n+1)^k,\\
&d_{2k+1}((2k+1)n+k+1)=(-1)^{k+1\choose 2}4^k(n+1)^k,\\
&d_{2k}(2kn)=d_{2k}(2kn+1)=(-1)^{kn},\\
&d_{2k}(2kn+k)=-d_{2k}(2kn+k+1)=(-1)^{kn+{k\choose 2}}4^{k-1}(n+1)^{k-1}.
\end{align}
The purpose of this paper is to prove these conjectures. These methods seem to extend to the Hankel determinants of the sequences $\left({2n+r\choose n-s}\right)_{n\ge 0}$, but we do not compute these here.

In Sections 2 and 3 we review some well-known facts from the theory of Hankel determinants. In particular we compute $d_0(n)$ and $d_1(n)$. In Section 4 we define the matrix $\gamma$ and use it to compute $d_2(n)$. In Section 5 we introduce the matrices $\alpha_n$ and $\beta_n$, which serve as the basis of our method. In Section 6 we write the Hankel matrices in terms of these matrices. In Sections 7 and 8 we use this information to compute $d_r(n)$ in the aforementioned seven cases.

We would like to thank Darij Grinberg for his helpful suggestions.

\section{Some background material}

Let us first recall some well-known facts about Hankel determinants (cf. e.g. \cite{2018arXiv180105608C}). If $d_n=\det(a_{i+j})_{i,j=0}^{n-1}\neq 0$ for each $n$ we can define the polynomials
\begin{equation}
p_n(x)=\frac{1}{d_n}\det\begin{pmatrix}
a_0 & a_1 & \cdots & a_{n-1} & 1\\
a_1 & a_2 & \cdots & a_{n} & x\\
a_2 & a_3 & \cdots & a_{n+1} & x^2\\
\vdots & & & & \vdots\\
a_n & a_{n+1} & \cdots & a_{2n-1} & x^n\\
\end{pmatrix}.
\end{equation}

If we define a linear functional $L$ on the polynomials by  $L(x^n)=a_n$ then $L(p_np_m)=0$ for $n\neq m$ and $L(p_n^2)\neq 0$ (orthogonality).

By Favard's Theorem there exist complex numbers $s_n$ and $t_n$ such that
\begin{equation}
p_n(x)=(x-s_{n-1})p_{n-1}(x)-t_{n-2}p_{n-2}(x).
\end{equation}
For arbitrary $s_n$ and $t_n$ define numbers $a_n(j)$ by
\begin{align}\label{Fav1}
a_0(j)&=[j=0],\nonumber\\
a_n(0)&=s_0a_{n-1}(0)+t_0a_{n-1}(1),\\
a_n(j)&=a_{n-1}(j-1)+s_ja_{n-1}(j)+t_ja_{n-1}(j+1).\nonumber
\end{align}
These numbers satisfy
\begin{equation}\label{Fav2}\sum_{j=0}^n a_n(j)p_j(x)=x^n.\end{equation}

Let $A_n=(a_i(j))_{i,j=0}^{n-1}$ and $D_n$ be the diagonal matrix with entries $d(i,i)=\prod_{j=0}^{i-1} t_j$. Then we get
\begin{equation}\label{ADA}\left(a_{i+j}(0)\right)_{i,j=0}^{n-1}=A_nD_nA_n^\top\end{equation}
and
\begin{equation}\det\left(a_{i+j}(0)\right)_{i,j=0}^{n-1}=\prod_{i=1}^{n-1}\prod_{j=0}^{i-1}t_j.\end{equation}

If we start with the sequence $(a_n)_{n\ge 0}$ and guess $s_n$ and $t_n$ and if we also can guess $a_n(j)$ and show that $a_n(0)=a_n$ then all our guesses are correct and the Hankel determinant is given by the above formula.

There is a well-known equivalence with continued fractions, so-called J-fractions:
\begin{equation}\sum_{n\ge 0}a_nx^n=\cfrac{1}{1-s_0x-\cfrac{t_0x^2}{1-s_1x-\cfrac{t_1x^2}{1-\ddots}}}.\end{equation}
For some sequences this gives a simpler approach to Hankel determinants.

As is well known Hankel determinants are intimately connected with the Catalan numbers $C_n=\frac{1}{n+1}{2n\choose n}$. Consider for example the aerated sequence of Catalan numbers $(c_n)=(1,0,1,0,2,0,5,0,14,0,\dots)$ defined by $c_{2n}=C_n$ and $c_{2n+1}=0$. Since the generating function of the Catalan numbers
\begin{equation}C(x)=\sum_{n\ge 0}C_nx^n=\frac{1-\sqrt{1-4x}}{2x}\end{equation}
satisfies
\begin{equation}C(x)=1+xC(x)^2,\end{equation}
we get
\begin{equation}C(x)=\frac{1}{1-xC(x)}\end{equation}
and
\begin{equation}C(x^2)=\frac{1}{1-x^2C(x^2)}=\cfrac{1}{1-\cfrac{x^2}{1-\cfrac{x^2}{1-\ddots}}}\end{equation}
and therefore
\begin{equation}\det(c_{i+j})_{i,j=0}^{n-1}=1.\end{equation}

From $C(x)=1+xC(x)^2$ we get $C(x)^2=1+2xC(x)^2+x^2C(x)^4$ or
\begin{equation}\label{C(x)^2}C(x)^2=\frac{1}{1-2x-x^2C(x)^2}=\cfrac{1}{1-2x-\cfrac{x^2}{1-2x-\cfrac{x^2}{1-2x-\ddots}}}.\end{equation}
The generating function of the central binomial coefficients $B_n={2n\choose n}$ is
\begin{equation}B(x)=\sum_{n\ge 0}B_nx^n=\frac{1}{\sqrt{1-4x}}=\frac{1}{1-2xC(x)}=\frac{1}{1-2x-2x^2C(x)^2}.\end{equation}
Therefore by \eqref{C(x)^2} we get the J-fraction
\begin{equation}B(x)=\frac{1}{1-2x-2x^2C(x)^2}=\cfrac{1}{1-2x-\cfrac{2x^2}{1-2x-\cfrac{x^2}{1-2x-\cfrac{x^2}{1-2x-\ddots}}}}.\end{equation}
Thus the corresponding numbers $t_n$ are given by $t_0=2$ and $t_n=1$ for $n>0$ which implies $d_0(n)=2^{n-1}$ for $n\ge 1$.

Let us also consider the aerated sequence $(b_n)$ with $b_{2n}=B_n$ and $b_{2n+1}=0$. Here we get
\begin{equation}b(x)=B(x^2)=\frac{1}{1-2x^2C(x)^2}=\cfrac{1}{1-\cfrac{2x^2}{1-\cfrac{x^2}{1-\cfrac{x^2}{1-\ddots}}}}.\end{equation}
In this case $s_n=0$, $t_0=2$, and $t_n=1$ for $n>0$. Here we also get $\det(b_{i+j})_{i,j=0}^{n-1}=2^{n-1}$ for $n>0$. The corresponding orthogonal polynomials satisfy $p_0(x)=1$, $p_1(x)=x$, $p_2(x)=xp_1(x)-2$ and $p_n(x)=xp_{n-1}(x)-p_{n-2}(x)$ for $n>2$. The first terms are $1, x, x^2-2, x^3-3x,\dots$.

Now recall that the Lucas polynomials
\begin{equation}L_n(x)=\sum_{k=0}^{\lfloor\frac{n}{2}\rfloor}(-1)^k{n-k\choose k}\frac{n}{n-k}x^{n-2k}\end{equation}
for $n>0$ satisfy $L_n(x)=xL_{n-1}(x)-L_{n-2}(x)$ with initial values $L_0(x)=2$ and $L_1(x)=x$. The first terms are $2,x,x^2-2,x^3-3x,\dots$. Thus $p_n(x)=\bar L_n(x)$, where $\bar L_n(x)=L_n(x)$ for $n>0$ and $\bar L_0(x)=1$.

For the numbers $a_n(j)$ we get
\begin{align}
&a_{2n}(2j)={2n\choose n-j},\\
&a_{2n+1}(2j+1)={2n+1\choose n-j},
\end{align}
and $a_n(j)=0$ else. Equivalently $a_n(n-2j)={n\choose j}$ and $a_n(k)=0$ else.

For the proof it suffices to verify \eqref{Fav1} which reduces to the trivial identities ${2n\choose n}=2{2n-1 \choose n-1}$, ${2n\choose n-j}={2n-1\choose n-j}+{2n-1\choose n-1-j}$, and ${2n+1\choose n-j}={2n\choose n-j}+{2n\choose n-1-j}$. Identity \eqref{Fav2} reduces to
\begin{equation}\label{inv}\sum_{k=0}^{\lfloor\frac{n}{2}\rfloor}{n\choose k}\bar L_{n-2k}=x^n.\end{equation}

\section{Some well-known applications of these methods}

Now let us consider
\begin{equation}
d_1(n)=\det{2i+2j+1\choose i+j}.
\end{equation}
The generating function of the sequence ${2n+1\choose n}$ is
\begin{equation}
\sum_{n\ge 0}{2n+1\choose n}x^n=\frac{1}{2}\sum_{n\ge 0}{2n+2\choose n+1}x^n=\frac{1}{2x}\left(\frac{1}{\sqrt{1-4x}}-1\right)=\frac{C(x)}{\sqrt{1-4x}}.
\end{equation}
Now we have
\begin{align}
\sqrt{1-4x}&=1-2xC(x)=(C(x)-xC(x)^2)-2xC(x)=C(x)(1-2x-xC(x))\nonumber\\
&=C(x)(1-2x-x(1+xC(x)^2))=C(x)(1-3x-x^2C(x)^2).
\end{align}
Therefore
\begin{equation}
\frac{C(x)}{\sqrt{1-4x}}=\frac{1}{1-3x-x^2C(x)^2}=\cfrac{1}{1-3x-\cfrac{x^2}{1-2x-\cfrac{x^2}{1-2x-\cfrac{x^2}{1-2x-\ddots}}}}.
\end{equation}
The corresponding sequences $s_n,t_n$ are $s_0=3$, $s_n=2$ for $n>0$ and $t_n=1$. Thus $d_1(n)=1$. The corresponding $a_i(j)$ are $a_i(j)={2i+1\choose i-j}$.

To prove this we must verify \eqref{Fav1} which reduces to
\begin{align}
&{1\choose -j}=[j=0],\\
&{2n+1\choose n}=3{2n-1\choose n-1}+{2n-1\choose n-2},\\
&{2n+1\choose n-j}={2n-1\choose n-j}+2{2n-1\choose n-1-j}+{2n-1\choose n-2-j}.
\end{align}
The first line is clear. The right-hand side of the second line gives
\begin{align}
3{2n-1\choose n-1}+{2n-1\choose n-2}&=2{2n-1\choose n-1}+{2n\choose n-1}\\
&={2n\choose n}+{2n\choose n-1}={2n+1\choose n}.\nonumber
\end{align}
For the third line we get
\begin{equation}
{2n-1\choose n-j}+2{2n-1\choose n-1-j}+{2n-1\choose n-2-j}={2n\choose n-j}+{2n\choose n-j-1}={2n+1\choose n-j}.
\end{equation}
By \eqref{ADA} we see that with
\begin{equation}
A_n=\left({2i+1\choose i-j}\right)_{i,j=0}^{n-1}
\end{equation}
we get
\begin{equation}\label{anan}
A_nA_n^\top=\left({2i+2j+1\choose i+j}\right)_{i,j=0}^{n-1}.
\end{equation}

Let us give a direct proof of \eqref{anan}. Observe first that
\begin{equation}\label{suml}
\sum_{l=0}^{n-1}{2i+1\choose i-l}{2j+1\choose j-l}=\sum_{l=0}^{i}{2i+1\choose i-l}{2j+1\choose j-l}=\sum_{l=0}^{j}{2i+1\choose i-l}{2j+1\choose j-l}
\end{equation}
and that
\begin{align}
\sum_{l=0}^{i}{2i+1\choose i-l}{2j+1\choose j-l}&=\sum_{l=0}^{i}{2i+1\choose i-l}{2j+1\choose j+1+l}\\
&=\sum_{k=j+1}^{i+j+1}{2i+1\choose i+j+1-k}{2j+1\choose k}\nonumber
\end{align}
and
\begin{align}
\sum_{l=0}^{j}{2i+1\choose i-l}{2j+1\choose j-l}&=\sum_{l=0}^{j}{2i+1\choose i+1+l}{2j+1\choose j-l}\\
&=\sum_{k=0}^{j}{2i+1\choose i+j+1-k}{2j+1\choose k}.\nonumber
\end{align}
Adding the last two equations and using \eqref{suml} to rewrite the left-hand side, we obtain
\begin{align}
&2\sum_{l=0}^{n-1}{2i+1\choose i-l}{2j+1\choose j-l}\nonumber\\
&=\sum_{k=0}^{j}{2i+1\choose i+j+1-k}{2j+1\choose k}+\sum_{k=j+1}^{i+j+1}{2i+1\choose i+j+1-k}{2j+1\choose k}\\
&=\sum_{k=0}^{i+j+1}{2i+1\choose i+j+1-k}{2j+1\choose k}={2i+2j+2\choose i+j+1}=2{2i+2j+1\choose i+j}.\nonumber
\end{align}
Since $A_n$ is a triangle matrix whose diagonal elements are ${2i+1\choose i-i}=1$ we get $\det(A_nA_n^\top)=1$.

\section{A new method}

Fix $k>0$. Let us consider the determinants of the Hankel matrices $B_n(k)=\left({2i+2j+2\choose i+j+1-k}\right)_{i,j=0}^{n-1}$. These have already been computed in \cite{CIGLER2011144}, Theorem 21. There it is shown that
\begin{equation}\label{bijk}
\det(B_{km}(k))=(-1)^{{m\choose 2}k+m{k\choose 2}}
\end{equation}
and $\det(B_n(k))=0$ else.

\begin{defn}
Let $\gamma^{(k)}$ be the infinite matrix given by $\gamma^{(k)}_{ij}=1$ if $|i-j|=k$ or $i+j=k-1$ and $0$ elsewhere, with rows and columns indexed by $\ZZ_{\ge 0}$. Let us also consider the finite truncations $\gamma^{(k)}|_N$, where $A|_N$ denotes the submatrix consisting of the first $N$ rows and columns of a matrix $A$. We shall also write $\gamma^{(1)}=\gamma$ and $\gamma^{(k)}|_N=\gamma^{(k)}_N$.
\end{defn}

For example $\gamma^{(1)}_5$ and $\gamma^{(2)}_5$ are the following matrices:
\begin{align}
\gamma^{(1)}_5=\begin{pmatrix}
1&1&0&0&0\\
1&0&1&0&0\\
0&1&0&1&0\\
0&0&1&0&1\\
0&0&0&1&0
\end{pmatrix}&\qquad
\gamma^{(2)}_5=\begin{pmatrix}
0&1&1&0&0\\
1&0&0&1&0\\
1&0&0&0&1\\
0&1&0&0&0\\
0&0&1&0&0
\end{pmatrix}
\end{align}

An alternative description will be useful in this section and the next. Let $J_n$ be the exchange matrix with $1$'s on its antidiagonal and $0$'s elsewhere. Let $Q_n$ be the block matrix $\begin{pmatrix}J_n & I_n\end{pmatrix}$. Let $\sigma_n$ be the shift matrix with $(i,j)$ entry equal to $1$ if $j=i-1$ and $0$ otherwise. Then $\gamma_n^{(k)}=Q_n\sigma_{2n}^kQ_n^\top$.

\begin{thm}\label{angan}
\begin{equation}\label{ancnk}
A_n\gamma^{(k)}_nA_n^\top=B_n(k).
\end{equation}
\end{thm}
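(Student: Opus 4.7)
The natural strategy is to exploit the factorization $\gamma_n^{(k)}=Q_n\sigma_{2n}^kQ_n^\top$ noted just before the theorem. Writing $M_n:=A_nQ_n$, the identity to prove becomes $M_n\sigma_{2n}^kM_n^\top=B_n(k)$, so the first task is to compute the $n\times 2n$ matrix $M_n$. Since $Q_n=\begin{pmatrix}J_n & I_n\end{pmatrix}$, the right half of $M_n$ is simply $A_n$, with entries $\binom{2i+1}{i-j}$, while the left half is $A_nJ_n$, whose $(i,j)$-entry is $\binom{2i+1}{i+j-n+1}$. The key observation is that the binomial reflection $\binom{2i+1}{s}=\binom{2i+1}{2i+1-s}$ identifies the two expressions, so that uniformly $M_{n,ij}=\binom{2i+1}{i+j-n+1}$ for every column index $j\in\{0,\dots,2n-1\}$.

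Multiplication by $\sigma_{2n}^k$ on the right merely shifts columns, giving $(M_n\sigma_{2n}^k)_{ij}=\binom{2i+1}{i+j+k-n+1}$, with the understanding that entries with $j+k\ge 2n$ vanish. Consequently
\[
(A_n\gamma_n^{(k)}A_n^\top)_{ij}=\sum_{l=0}^{2n-1-k}\binom{2i+1}{i+l+k-n+1}\binom{2j+1}{j+l-n+1},
\]
and after the change of variable $m=l-n+1$ this becomes $\sum_m\binom{2i+1}{i+k+m}\binom{2j+1}{j+m}$. Substituting $v=j+m$ and then reflecting $\binom{2j+1}{v}=\binom{2j+1}{2j+1-v}$ puts the sum into the standard Vandermonde form $\sum_w\binom{2j+1}{w}\binom{2i+1}{i+j+k+1-w}$, which collapses to $\binom{2i+2j+2}{i+j+k+1}$. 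One final reflection gives $\binom{2i+2j+2}{i+j+1-k}=B_n(k)_{ij}$, as desired.

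The one genuine subtlety is justifying that the $m$-sum may be extended over all integers before Vandermonde is applied. I would verify separately that terms with $m>n-k$ kill $\binom{2i+1}{i+k+m}$ (because then $i+k+m>2i+1$, using $n\ge i+1$), while terms with $m<1-n$ kill $\binom{2j+1}{j+m}$ (since then $j+m<0$, using $j\le n-1$). This index bookkeeping is the only place where care is needed; once it is in place, the Vandermonde step is immediate and the theorem follows.
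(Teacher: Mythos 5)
Your proof is correct and follows essentially the same route as the paper's: both exploit the factorization $\gamma_n^{(k)}=Q_n\sigma_{2n}^kQ_n^\top$, compute the uniform entry $(A_nQ_n)_{ij}=\binom{2i+1}{i+j-(n-1)}$, and finish with Chu--Vandermonde. Your explicit justification for extending the summation range before applying Vandermonde is a welcome bit of care that the paper leaves implicit.
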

\begin{proof}
If we set $\gamma_n^{(0)}=2I_n$, where $I_n$ denotes the $n\times n$-identity matrix, then we already know that \eqref{ancnk} holds for $k=0$.

For the general case, we have $(A_nQ_n)_{ij}=\binom{2i+1}{i+j-(n-1)}$. Hence the $(i,j)$ entry of $A_nQ_n\sigma_{2n}Q_n^\top A_n^\top$ is
\begin{align}
\sum_{0\le r,s\le n-1}(A_nQ_n)_{ir}(\sigma^k)_{rs}(A_nQ_n)_{js}&=\sum_{0\le r,s\le n-1}\binom{2i+1}{i+r-(n-1)}\delta_{r-k,s}\binom{2j+1}{j+s-(n-1)}\nonumber\\
&=\sum_{r=n-1-i}^{n+i}\binom{2i+1}{i+r-(n-1)}\binom{2j+1}{j+r-k-(n-1)}\nonumber\\
&=\sum_{r'=0}^{2i+1}\binom{2i+1}{r'}\binom{2j+1}{j-i+r'-k}\\
&=\binom{2i+2j+2}{i+j+1-k}.\nonumber
\end{align}
The last identity follows from the Chu-Vandermonde formula.
\end{proof}

\begin{lem}\label{detckn}
\begin{align}
&\det(\gamma^{(k)}_{2kn})=(-1)^{kn}\\
&\det(\gamma^{(k)}_{2kn+k})=(-1)^{kn+{k\choose 2}}
\end{align}
and all other determinants $\det(\gamma^{(k)}_n)$ vanish.
\end{lem}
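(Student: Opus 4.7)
The plan is to interpret $\gamma^{(k)}_n$ as the adjacency matrix of a graph $G$ on $\{0,1,\ldots,n-1\}$ and read off the determinant from its decomposition into simple components. The shift entries (at positions with $|i-j|=k$) form, within each residue class mod $k$, a path on $L_r$ vertices, where $L_r$ is the number of elements of $\{0,\ldots,n-1\}$ in residue $r$. The reflection entries (at positions with $i+j=k-1$) glue each such path to the path of residue $k-1-r$ at the endpoints $r$ and $k-1-r$, becoming a self-loop at $(k-1)/2$ when $k$ is odd. For $n\ge k$ the components of $G$ are therefore: one path on $L_r+L_{k-1-r}$ vertices for each unordered pair $\{r,k-1-r\}$ with $r\ne k-1-r$, and, when $k$ is odd, one path on $L_{(k-1)/2}$ vertices with a self-loop at one end. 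A single simultaneous row-and-column permutation listing the vertices component by component (in path order) brings $\gamma^{(k)}_n$ to block-diagonal form, so $\det(\gamma^{(k)}_n)$ factors as a product of block determinants.

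Two recursions handle the blocks. Expanding the adjacency matrix $P_N$ of an $N$-vertex path along its last row yields $\det(P_N)=-\det(P_{N-2})$, so that $\det(P_{2m})=(-1)^m$ and $\det(P_{2m+1})=0$. The same expansion applied to $\gamma^{(1)}_N$ (which is precisely a path with a self-loop at one end) gives $\det(\gamma^{(1)}_N)=-\det(\gamma^{(1)}_{N-2})$ with $\det(\gamma^{(1)}_1)=1$ and $\det(\gamma^{(1)}_2)=-1$, hence $\det(\gamma^{(1)}_{2m})=\det(\gamma^{(1)}_{2m+1})=(-1)^m$; this is already the $k=1$ case of the lemma.

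To assemble the full statement, write $n=kQ+R$ with $0\le R<k$, so that $L_r=Q+\mathbf{1}[r<R]$. If $R>0$, the pair $\{0,k-1\}$ contributes a path on $L_0+L_{k-1}=2Q+1$ vertices, whose determinant vanishes, forcing $\det(\gamma^{(k)}_n)=0$. If $R=0$, all $L_r=Q$, and the block product equals $(-1)^{Qk/2}$ for $k$ even and $(-1)^{Q(k-1)/2}\cdot\det(\gamma^{(1)}_Q)$ for $k$ odd. Substituting $Q=2N$ (i.e.\ $n=2kN$) and $Q=2N+1$ (i.e.\ $n=2kN+k$), and using $\binom{k}{2}\equiv k/2\pmod 2$ for $k$ even and $\binom{k}{2}\equiv(k-1)/2\pmod 2$ for $k$ odd, the two formulas $(-1)^{kN}$ and $(-1)^{kN+\binom{k}{2}}$ drop out in each parity of $k$. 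The boundary case $n<k$ is handled separately: there are then no shift edges and nonzero entries lie only on a partial anti-diagonal, so rows $0,\ldots,k-n-1$ of $\gamma^{(k)}_n$ are zero and $\det=0$ (with $\det(\gamma^{(k)}_0)=1$ agreeing with $N=0$).

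The main obstacle is the parity bookkeeping in the last step: one has to collect $(-1)^Q$ across $\lfloor k/2\rfloor$ super-paths, multiply by $\det(\gamma^{(1)}_Q)$ when $k$ is odd, and check that the total parity agrees with $kN+\binom{k}{2}$ modulo $2$ in the four cases determined by the parities of $k$ and $Q$. The graph decomposition, the block-diagonal reduction, and the two cofactor recursions themselves are all routine.
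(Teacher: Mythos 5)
Your proof is correct, and it takes a genuinely different route from the paper's. The paper argues directly on the permutation expansion of the determinant: the last $k$ rows and columns of $\gamma^{(k)}_n$ each contain a single nonzero entry, which forces the transpositions $(n-j,\;n-j-k)$ for $1\le j\le k$ in any contributing permutation and yields the induction $\det\gamma^{(k)}_n=(-1)^k\det\gamma^{(k)}_{n-2k}$, with base cases $n=k$ (a pure anti-diagonal), $0<n<k$ (a zero row), and $k<n<2k$ (two equal rows, namely rows $k-1$ and $k$). You instead conjugate by the permutation sorting indices by residue mod $k$, exhibiting $\gamma^{(k)}_n$ as a direct sum of $\lfloor k/2\rfloor$ path adjacency matrices (each pairing residues $r$ and $k-1-r$) plus, for odd $k$, one copy of $\gamma^{(1)}_{L_{(k-1)/2}}$, and then quote the standard recursions $\det P_N=-\det P_{N-2}$ and $\det\gamma^{(1)}_N=-\det\gamma^{(1)}_{N-2}$. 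I checked the details: the component structure is right (all reflection edges live among indices $0,\dots,k-1$ and join the left endpoints of the residue paths), the vanishing for $k\nmid n$ follows correctly from the odd path on $L_0+L_{k-1}=2Q+1$ vertices, the $n<k$ boundary case is handled, and the final parity identities $\binom{k}{2}\equiv k/2$ ($k$ even) and $\binom{k}{2}\equiv(k-1)/2$ ($k$ odd) make the four cases come out to $(-1)^{kN}$ and $(-1)^{kN+\binom{k}{2}}$ as claimed. Your decomposition is more structural and reduces general $k$ to the $k=1$ case in one shot, at the cost of the parity bookkeeping you flag; the paper's induction builds the sign $(-1)^k$ into each step and disposes of the vanishing cases slightly more cheaply, but obscures the block structure that your argument makes explicit.
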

\begin{proof}
By the definition of a determinant we have
\begin{equation}
\det(a_{i,j})_{i,j=0}^{n-1}=\sum_\pi\text{sgn}(\pi)a_{0,\pi(0)}a_{1,\pi(1)}\cdots a_{n-1,\pi(n-1)}
\end{equation}
where $\pi$ runs over all permutations of the set $\{0,1,\dots,n-1\}$. The determinants of the matrices $\gamma^{(k)}_n$ either vanish or the sum over all permutations reduces to a single term $\text{sgn}\pi_n c(0,\pi_n(0),k)c(1,\pi_n(1),k)\cdots c(n-1,\pi_n(n-1),k)$.

Let us first consider $k=1$. The last row of $\gamma^{(1)}_n$ has only one non-vanishing element $c(n-1,n-2,1)$. Thus each $\pi$ which occurs in the determinant must satisfy $\pi(n-1)=n-2$. The next row from below contains two non-vanishing elements $c(n-2,n-3,1)$ and $c(n-2,n-1,1)$. The last element is the only element of the last column. Therefore we must have $\pi(n-2)=n-1$. The next row from below contains again two non-vanishing elements, $c(n-3,n-4)$ and $c(n-3,n-2)$. But since $n-2$ already occurs as image of $\pi$ we must have $\pi(n-3)=n-4$. Thus the situation has been reduced to $\gamma^{(1)}_{n-2}$. In order to apply induction we need the two initial cases $\gamma^{(1)}_1$ and $\gamma^{(1)}_2$.

For $n=1$ we get $\pi(0)=0$ and for $n=2$ $\pi(0)=1$ and $\pi(1)=0$ since
\begin{equation}
\gamma^{(1)}_2=\begin{pmatrix}1&1\\1&0\end{pmatrix}.
\end{equation}

If we write $\pi=\pi(0)\cdots\pi(n-1)$ we get in this way $\pi_1=0$, $\pi_2=10$, $\pi_3=021$, $\pi_4=1032$,$\dots$. This gives $\text{sgn}\pi_n=-\text{sgn}\pi_{n-2}$ and thus by induction $\det \gamma^{(1)}_n=(-1)^{n\choose 2}$, which agrees with \eqref{bijk}.

For general $k$ the situation is analogous. The last $k$ rows and columns contain only one non-vanishing element. This implies $\pi(n-j)=n-j-k$ and $\pi(n-j-k)=n-j$ for $1\le j\le k$ and $n\geq 2k$. Hence $\pi$ restricts to a permutation of $\{0,1,...,n-2k-1\}$. Thus the determinant can be reduced to $\gamma^{(k)}_{n-2k}$ and we get $\det \gamma^{(k)}_n=(-1)^k\det \gamma^{(k)}_{n-2k}$ if $n\ge 2k$.

For $n=k$ $\gamma^{(k)}_k$ reduces to the anti-diagonal and thus $\det \gamma^{(k)}_k=(-1)^{k\choose 2}$. For $0<n<k$ the first row of $\gamma^{(k)}_n$ vanishes and thus $\det \gamma^{(k)}_n=0$. For $k<n<2k$ there are two identical rows because $c(k-1,0,k)=c(k,0,k)=1$ and $c(k-1,j,k)=c(k,j,k)=0$ for $0<j<n$. Thus we see by induction that
\begin{align}
&\det(\gamma^{(k)}_{2kn})=(-1)^{kn}\\
&\det(\gamma^{(k)}_{2kn+k})=(-1)^{kn+{k\choose 2}}
\end{align}
and all other determinants vanish. This is the same as \eqref{bijk} because $(-1)^{{2n\choose 2}k+2n{k\choose 2}}=(-1)^{kn}$ and $(-1)^{{2n+1\choose 2}k+(2n+1){k\choose 2}}=(-1)^{kn+{k\choose 2}}$.
\end{proof}

\section{Two useful matrices}

For the finite matrices $\gamma_N=\gamma|_N$ we have $\gamma_N^k\neq \gamma^k|_N$. In order to compute $\gamma^k|_N$ in the realm of $N$-by-$N$-matrices we introduce the auxiliary matrices $\alpha^{(k)}_N$ and $\beta^{(k)}_N$.

Let $J_N$ be the exchange matrix with $1$'s on its antidiagonal and $0$'s elsewhere. Let $Q_N$ be the block matrix $\begin{pmatrix}J_N & I_N\end{pmatrix}$. Let $\sigma_N(\eps)$ be given by
\begin{equation}
(\sigma_N(\eps))_{ij}=\begin{cases}
1 & \text{if } i=j+1\\
\eps & \text{if } (i,j)=(0,N-1)\\
0 & \text{otherwise}.
\end{cases}
\end{equation}
Define $\alpha^{(k)}_N$ and $\beta^{(k)}_N$ as
\begin{equation}
Q_N\sigma_{2N}(\eps)^kQ_N^\top=\begin{cases}
\alpha_N^{(k)} & \text{if } \eps=1\\
\beta_N^{(k)} & \text{if } \eps=-1\\
\gamma_N^{(k)} & \text{if } \eps=0,
\end{cases}
\end{equation}
and the last line has been stated before. We shall again suppress the superscripts when $k=1$.

As a slight variation, consider the following infinite square matrices with rows and columns indexed by $\ZZ\setminus\{0\}=\{\dots,-2,-1,1,2,\dots\}$. Let $\bar I$ be the identity matrix and let $\bar J$ be the exchange matrix with $\bar J_{n,-n}=1$ for all $n$ and $0$ elsewhere. Let $\bar \sigma$ be given by $\bar \sigma_{n,n-1}=1$ and $0$ elsewhere. Define also the infinite rectangular matrix $\bar Q$ with rows indexed by $\ZZ_+=\{1,2,\dots\}$ and columns indexed by $\ZZ\setminus\{0\}$ by $\bar Q_{|n|,n}=1$ for $n\in\ZZ\setminus\{0\}$ and $0$ elsewhere. Note that $\gamma^{(k)}=\bar Q\bar \sigma^k\bar Q^T$, after shifting indices from $\ZZ_{\ge 0}$ to $\ZZ_+$.

\begin{thm}\label{mult}
When $\delta$ stands for either $\alpha_N$, $\beta_N$, or $\gamma$ one has $\delta^{(k)}=\delta\cdot\delta^{(k-1)}-\delta^{(k-2)}$ with initial values $\delta^{(1)}=\delta$ and $\delta^{(0)}=2$.
\end{thm}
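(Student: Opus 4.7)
The plan is to realize each of $\alpha_N^{(k)}$, $\beta_N^{(k)}$, and $\gamma^{(k)}$ uniformly as $Q\sigma^k Q^\top$, with $(Q,\sigma)=(Q_N,\sigma_{2N}(\eps))$ and $\eps=\pm 1$ for $\alpha_N$ and $\beta_N$, and $(Q,\sigma)=(\bar Q,\bar\sigma)$ for the infinite $\gamma$. The three-term recurrence will then fall out of a handful of block identities relating $Q$, $\sigma$, and the ambient exchange matrix $J$ (equal to $J_{2N}$ or $\bar J$).

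I would first collect the four identities that carry the whole argument:
\begin{enumerate}
\item $QQ^\top=2I$, which immediately gives $\delta^{(0)}=QQ^\top=2I$, matching the stated base value;
\item $Q^\top Q=I+J$;
\item $QJ=Q$;
\item $\sigma J\sigma=J$.
\end{enumerate}
Identities (i)--(iii) are short block computations starting from $Q_N=(J_N\ I_N)$ and $J_N^2=I_N$, with the analogous statements in the infinite case following by the same argument applied to $(\bar Q,\bar J)$.

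Granting these, the recurrence for $k\ge 2$ is a single manipulation:
\begin{align*}
\delta\cdot\delta^{(k-1)}
&=Q\sigma Q^\top\,Q\sigma^{k-1}Q^\top
=Q\sigma(I+J)\sigma^{k-1}Q^\top\\
&=Q\sigma^kQ^\top+Q(\sigma J\sigma)\sigma^{k-2}Q^\top
=\delta^{(k)}+QJ\sigma^{k-2}Q^\top
=\delta^{(k)}+\delta^{(k-2)},
\end{align*}
which rearranges to $\delta^{(k)}=\delta\cdot\delta^{(k-1)}-\delta^{(k-2)}$. The base cases $\delta^{(0)}=2I$ and $\delta^{(1)}=Q\sigma Q^\top=\delta$ are built into the definitions.

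The hard part will be verifying identity (iv). A direct block computation shows that $\sigma_{2N}(\eps)\,J_{2N}\,\sigma_{2N}(\eps)$ differs from $J_{2N}$ only in the top-right corner entry, where it has $\eps^{2}$ in place of $1$; hence (iv) holds precisely when $\eps^{2}=1$. This is exactly why the theorem forces $\eps=\pm1$ in the $\alpha_N$ and $\beta_N$ cases, and it matches the failure of the analogous recursion for the truncation $\gamma_N^{(k)}=Q_N\sigma_{2N}(0)^kQ_N^\top$ alluded to at the start of the section. In the infinite $\gamma$ case, $\bar\sigma\bar J\bar\sigma=\bar J$ holds by a direct index check, which is what legitimises the uniform calculation above for that matrix family.
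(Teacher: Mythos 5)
Your proof is correct and follows essentially the same route as the paper's: the uniform realization $\delta^{(k)}=Q\sigma^kQ^\top$ together with the identities $Q^\top Q=I+J$, $QJ=Q$, and $\sigma J\sigma=J$, followed by the identical one-line telescoping computation. Your explicit check that $\sigma_{2N}(\eps)J_{2N}\sigma_{2N}(\eps)$ equals $J_{2N}$ exactly when $\eps^2=1$ is a nice addition that the paper leaves implicit, but it does not change the argument.
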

\begin{proof}
For $\alpha_N$ and $\beta_N$, take $\sigma=\sigma_{2N}(\pm 1)$, $Q=Q_N$, $J=J_{2N}$, and $I=I_{2N}$. For $\gamma$, take $\sigma=\bar \sigma$, $Q=\bar Q$, $J=\bar J$, and $I=\bar I$. Note that in either case $Q^\top Q=I+J$, $\sigma J\sigma=J$, and $QJ=Q$. For $k\ge 2$,
\begin{align}
\delta\cdot\delta^{(k-1)}&=Q\sigma Q^\top Q\sigma^{k-1} Q^\top\nonumber\\
&=Q\sigma(I+J)\sigma^{k-1}Q^\top\\
&=Q\sigma^k Q^\top+Q(\sigma J\sigma)\sigma^{k-2} Q^\top\nonumber\\
&=\delta^{(k)}+\delta^{(k-2)}.\nonumber
\end{align}
\end{proof}

By induction we see that each $\gamma^{(k)}$ is a polynomial in $\gamma$. Therefore all $\gamma^{(k)}$ commute. Theorem \ref{mult} shows that the matrices $\gamma^{(k)}$ are Lucas polynomials in $\gamma$. More precisely
\begin{equation}\label{lkg}
\gamma^{(k)}=L_k(\gamma).
\end{equation}
By the same argument, $\alpha_N^{(k)}=L_k(\alpha_N)$ and $\beta_N^{(k)}=L_k(\beta_N)$.

\begin{thm}\label{abg}
For any polynomial $p$ with $\deg p\le 2N$, $\frac{p(\alpha_N)+p(\beta_N)}{2}=p( \gamma)|_N$.
\end{thm}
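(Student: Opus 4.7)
The plan is to reduce the claim to the Lucas polynomial basis and then verify a single matrix identity directly at the level of the shift matrices $\sigma_{2N}(\eps)$. Because both sides of the asserted identity depend linearly on $p$, and because $\{L_0,L_1,\dots,L_{2N}\}$ is a basis of the space of polynomials of degree at most $2N$ (since $\deg L_k=k$ for $k\ge 1$ while $L_0=2$ is a nonzero constant), it suffices to verify the identity when $p=L_k$ for each $0\le k\le 2N$.

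Using the identifications $L_k(\alpha_N)=\alpha_N^{(k)}$, $L_k(\beta_N)=\beta_N^{(k)}$, and $L_k(\gamma)=\gamma^{(k)}$ from the discussion following Theorem~\ref{mult}, together with the defining formulae $\alpha_N^{(k)}=Q_N\sigma_{2N}(1)^k Q_N^\top$, $\beta_N^{(k)}=Q_N\sigma_{2N}(-1)^k Q_N^\top$, and $\gamma^{(k)}|_N=Q_N\sigma_{2N}(0)^k Q_N^\top$, the claim for $p=L_k$ reduces to
\[
Q_N\bigl(\sigma_{2N}(1)^k+\sigma_{2N}(-1)^k\bigr)Q_N^\top = 2\,Q_N\,\sigma_{2N}(0)^k\, Q_N^\top .
\]
I would actually prove the stronger unconjugated identity
\[
\sigma_{2N}(1)^k+\sigma_{2N}(-1)^k = 2\,\sigma_{2N}(0)^k \qquad (0\le k\le 2N).
\]

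The heart of the proof is this shift-matrix identity, which I would verify by writing down entries explicitly: for $1\le k\le 2N-1$ one computes
\[
\bigl(\sigma_{2N}(\eps)^k\bigr)_{ij}=[\,i=j+k\,]+\eps\,[\,i+2N=j+k\,],
\]
so the wrap-around contributions from $\eps=+1$ and $\eps=-1$ exactly cancel upon adding, leaving twice the $\eps=0$ matrix. The boundary cases $k=0$ (both sides equal $2I$) and $k=2N$ (where $\sigma_{2N}(\pm 1)^{2N}=\pm I$ while $\sigma_{2N}(0)^{2N}=0$, so both sides vanish) are checked by hand.

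The only mild obstacle I foresee is the boundary case $k=2N$, where one must notice the neat cancellation $I+(-I)=0$. This is also precisely why the degree hypothesis $\deg p\le 2N$ is sharp: for $k>2N$ multiple wrap-arounds appear with different powers of $\eps$ that no longer cancel between $\eps=\pm 1$, so the analogous identity would fail.
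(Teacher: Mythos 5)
Your proposal is correct and follows essentially the same route as the paper: reduce to the Lucas basis $(L_0,\dots,L_{2N})$ via linearity, identify $L_k(\alpha_N)$, $L_k(\beta_N)$, $L_k(\gamma)|_N$ with $Q_N\sigma_{2N}(\eps)^kQ_N^\top$ for $\eps=1,-1,0$, and observe that the wrap-around terms cancel so that $\sigma_{2N}(1)^k+\sigma_{2N}(-1)^k=2\sigma_{2N}(0)^k$ for $k\le 2N$. The paper states this last cancellation without the explicit entrywise verification you supply, so your write-up is simply a more detailed version of the same argument.
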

\begin{proof}
Note that $(L_0, \dots,L_{2N})$ is a basis of the vector space of degree at most $2N$ polynomials, since $\deg(L_k)=k$. Therefore it suffices to show that $(L_k(\alpha_N)+L_k(\beta_N))/2=L_k(\gamma)|_N$ for $k\le 2N$. To wit,
\begin{align}
(L_k(\alpha_N)+L_k(\beta_N))/2&=(\alpha_N^{(k)}+\beta_N^{(k)})/2\nonumber\\
&=Q_N(\sigma_{2N}(1)^k+\sigma_{2N}(-1)^k)Q_{N}^\top/2\nonumber\\
&=Q_N\sigma_{2N}(0)^kQ_{N}^\top\\
&=\gamma_N^{(k)}\nonumber\\
&=L_k(\gamma)|_N.\nonumber
\end{align}
\end{proof}

\section{Relating the determinant to the $\gamma$ matrices}\label{relate}

Let $a_n$, $b_n$, and $g_n$ be the characteristic polynomials of $\alpha_n$, $\beta_n$, and $\gamma_n$, respectively. By cofactor expansion along the last row we get $g_n(x)=xg_{n-1}(x)-g_{n-2}(x)$, $a_n(x)=g_n(x)-g_{n-1}(x)$, and $b_n(x)=g_n(x)+g_{n-1}(x)$. This plus the initial conditions of the $n=1$ and $2$ cases gives $b_n(x)=L_n(x)$,
\begin{equation}\label{gngk}
g_n(x)=\sum_{k=0}^n (-1)^{n-k}\bar L_k(x),
\end{equation}
and
\begin{equation}
a_n(x)=L_n(x)+2\sum_{k=0}^{n-1}(-1)^{n-k}\bar L_k(x).
\end{equation}
Here $\bar L_n(x)$ is the Lucas polynomial $L_n(x)$ except when $n=0$, in which case it is $1$.

By Theorem \ref{angan}, $A\phi(\gamma)A^\top$ is Hankel for all polynomials $\phi$. Here $A$ represents the infinite matrix $\left({2i+1\choose i-j}\right)_{i,j\ge 0}$ with finite truncations $A|_n=A_n$. This is because $\phi(x)$ can be expanded as a sum of Lucas polynomials $L_k(x)$, each of which gives a Hankel matrix. Moreover, multiplying the polynomial by $(x+2)$ shifts the Hankel matrix forward by $1$. It suffices to show this for $L_k(x)$. Recall that
\begin{equation}
AL_k(\gamma)A^\top=A\gamma^{(k)}A^\top=\left(\binom{2i+2j+2}{i+j+1-k}\right)_{i,j\ge 0}.
\end{equation}
Then by Theorem \ref{mult},
\begin{equation}
AL_k(\gamma)(\gamma+2)A^\top=A(\gamma^{(k-1)}+\gamma^{(k+1)}+2\gamma^{(k)})A^\top
\end{equation}
has $(i,j)$ entry
\begin{equation}
\binom{2i+2j+2}{i+j+2-k}+2\binom{2i+2j+2}{i+j+1-k}+\binom{2i+2j+2}{i+j-k}=\binom{2i+2j+4}{i+j+2-k}
\end{equation}
by Pascal's identity, which is the $(i+1,j)$ entry of the original matrix.

We'll now write the Hankel matrices of the sequence $(\binom{2n+r}{n})_{n\ge 0}$ in terms of the $\gamma$ matrices and $A$. By the above results, when $i+j=n$ we have
\begin{equation}
(Ab_k(\gamma)A^\top)_{ij}=(AL_k(\gamma)A^\top)_{ij}=\binom{2n+2}{n+1-k}
\end{equation}
and
\begin{equation}
(Ab_k(\gamma)(\gamma+2)^{k-1}A^\top)_{ij}=\binom{2(n+k-1)+2}{(n+k-1)+1-k}=\binom{2n+2k}n.
\end{equation}

By induction on $k$, we'll show $(Ag_k(\gamma)A^\top)_{ij}=\binom{2n+1}{n-k}$. The $k=0$ case is \eqref{anan}. For $k\ge 1$,
\begin{align}
(Ag_k(\gamma)A^\top)_{ij}&=(A(b_k(\gamma)-g_{k-1}(\gamma))A^\top)_{ij}\nonumber\\
&=\binom{2n+2}{n+1-k}-\binom{2n+1}{n-(k-1)}=\binom{2n+1}{n-k}.
\end{align}
Hence
\begin{equation}
(Ag_k(\gamma)(\gamma+2)^kA^\top)_{ij}=\binom{2(n+k)+1}{(n+k)-k}=\binom{2n+2k+1}{n}.
\end{equation}
We've proven the following theorem.

\begin{thm}\label{Gamma}
For $r\ge 1$, let $k=\lfloor\frac{r}{2}\rfloor$ and $l=\lfloor\frac{r-1}{2}\rfloor$, and define the function
\begin{equation}
h_r(x)=
\begin{cases}
g_k(x) & \text{if } r=2k+1\\
b_k(x) & \text{if } r=2k.
\end{cases}
\end{equation}
For $N\ge k+l$, by Theorem \ref{abg}, $d_r(N)$ equals
\begin{equation}
\det\left(h_r(\gamma)(\gamma+2)^l|_N\right)=\det\left(\frac{1}{2}\left(h_r(\alpha_N)(\alpha_N+2)^l+h_r(\beta_N)(\beta_N+2)^l\right)\right).
\end{equation}
\end{thm}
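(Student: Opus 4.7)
The plan is to combine the two infinite-matrix identities proven immediately above the theorem statement with the lower triangularity of $A$, and then invoke Theorem \ref{abg} to transfer from $\gamma$ to the average over $\alpha_N,\beta_N$. The first step is to unify the parity cases: with $k=\lfloor r/2\rfloor$ and $l=\lfloor(r-1)/2\rfloor$, the identities $(Ab_k(\gamma)(\gamma+2)^{k-1}A^\top)_{ij}=\binom{2n+2k}{n}$ and $(Ag_k(\gamma)(\gamma+2)^k A^\top)_{ij}=\binom{2n+2k+1}{n}$ (with $n=i+j$) collapse into the single statement
\begin{equation*}
\bigl(Ah_r(\gamma)(\gamma+2)^l A^\top\bigr)_{ij}=\binom{2i+2j+r}{i+j},
\end{equation*}
so the left-hand infinite matrix is exactly the Hankel array whose top-left $N\times N$ block has determinant $d_r(N)$.

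Next I would pass from this infinite product to a finite determinant using the fact that $A$ is lower triangular with unit diagonal: $A_{ii}=\binom{2i+1}{0}=1$ and $A_{ij}=0$ for $j>i$. For any infinite matrix $M$ and any $N$, in the double sum defining $(AMA^\top)_{ij}$ with $i,j<N$ only indices $p\le i<N$ and $q\le j<N$ contribute, so
\begin{equation*}
(AMA^\top)|_N = A_N\,(M|_N)\,A_N^\top.
\end{equation*}
Applying this with $M=h_r(\gamma)(\gamma+2)^l$ and using $\det A_N=1$ immediately yields $d_r(N)=\det\!\bigl(h_r(\gamma)(\gamma+2)^l|_N\bigr)$, which is the first of the two claimed equalities.

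For the second equality I would apply Theorem \ref{abg} to the polynomial $p(x)=h_r(x)(x+2)^l$. Here $\deg h_r=k$ (both $b_k=L_k$ and $g_k$ have degree $k$, the latter being visible from \eqref{gngk} since the top-degree contribution comes from $\bar L_k=L_k$) and $\deg(x+2)^l=l$, so $\deg p=k+l\le 2N$ under the hypothesis $N\ge k+l$. Theorem \ref{abg} then gives $p(\gamma)|_N=\tfrac{1}{2}\bigl(p(\alpha_N)+p(\beta_N)\bigr)$, which is the desired second equality. The only real bookkeeping is this degree check, and I do not expect a genuine obstacle: the substance of the theorem already sits in the binomial-to-$\gamma$ rewriting carried out in the paragraphs preceding the statement, and the theorem is essentially the packaging of that work together with Theorem \ref{abg}.
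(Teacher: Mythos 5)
Your proposal is correct and follows essentially the same route as the paper: the two binomial identities derived just before the theorem statement, the reduction $\det\bigl((AMA^\top)|_N\bigr)=\det\bigl(M|_N\bigr)$ via the unit lower triangularity of $A$, and an application of Theorem \ref{abg} to $p(x)=h_r(x)(x+2)^l$ with the degree check $k+l\le N\le 2N$. The paper treats the theorem as already proven by the preceding computations, and your write-up just makes those same steps explicit.
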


\section{Structure of the matrices}\label{struct}

In this section we determine the structure of the matrices $(\beta_N+2)^{-1}$, $g_k(\alpha_N)$, $g_k(\beta_N)$, $b_k(\alpha_N)$, and $b_k(\beta_N)$, as well as the determinants of $g_k(\gamma)|_N$ and $b_k(\gamma)|_N$.

To determine $p(\alpha_N)$ and $p(\beta_N)$ for a polynomial $p$ of degree less than $N$, we begin by writing $p(\gamma)$ as a sum of $\gamma^{(k)}$ matrices using the multiplicative formula of Theorem \ref{mult}. We then apply Prop \ref{corner} to show that $p(\alpha_N)$ and $p(\beta_N)$ are the same as $p(\gamma)|_N$ on and above the anti-diagonal. The structure of $p(\alpha_N)$ follows from the symmetry of $\alpha_N$ across its anti-diagonal. The structure of $p(\beta_N)$ can be computed from $p(\alpha_N)$ and $p(\gamma)|_N$ with Theorem \ref{abg}.

\begin{prop}\label{block}
The determinant of a block matrix
\begin{equation}
\begin{pmatrix}
A&B\\
C&D
\end{pmatrix}
\end{equation}
where $A$ and $D$ are square and $D$ is invertible is $\det(D)\det(A-BD^{-1}C)$.
\end{prop}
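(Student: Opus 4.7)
The plan is to reduce the determinant to that of a block triangular matrix via an explicit factorization. Specifically, using only that $D$ is invertible, I would verify by direct block multiplication the identity
\begin{equation*}
\begin{pmatrix} A & B \\ C & D \end{pmatrix}
=\begin{pmatrix} I & BD^{-1} \\ 0 & I \end{pmatrix}
\begin{pmatrix} A - BD^{-1}C & 0 \\ C & D \end{pmatrix},
\end{equation*}
checking that the four block entries of the right-hand product recover $A$, $B$, $C$, $D$ respectively. (A fully symmetric alternative would be to sandwich a block diagonal matrix between two block unitriangular matrices, but the one-sided version above is enough.)

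Once the factorization is in hand, the formula follows from two standard facts: the determinant is multiplicative, and the determinant of a block triangular matrix with square diagonal blocks equals the product of the determinants of those diagonal blocks. The first factor above is upper block triangular with identity diagonal blocks, so has determinant $1$, while the second factor is lower block triangular with diagonal blocks $A - BD^{-1}C$ and $D$, so has determinant $\det(A-BD^{-1}C)\det(D)$. Multiplying yields the stated formula.

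The only supporting ingredient requiring justification is the block triangular determinant fact itself. I would prove it via the Leibniz expansion: if the upper-right (or lower-left) entries vanish, then every permutation contributing a nonzero term must preserve the partition of indices into the two block ranges, since otherwise some factor in the product is a zero entry. The signed sum therefore factors as a product of two determinants, one for each block.

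There is no genuine obstacle here; the result is a standard Schur complement identity and the argument is essentially a verification. The only care required is to ensure the block sizes match so that all indicated products $BD^{-1}$ and $BD^{-1}C$ are well-defined, which is automatic because $A$ and $D$ are square of complementary sizes.
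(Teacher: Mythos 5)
Your proof is correct and follows essentially the same route as the paper: an explicit block factorization that isolates the Schur complement $A-BD^{-1}C$ in a block triangular factor, followed by multiplicativity of the determinant and the block-triangular determinant rule. The only cosmetic difference is that the paper multiplies on the right by a lower unitriangular block matrix to produce an upper block triangular matrix, whereas you factor the original matrix as an upper unitriangular times a lower block triangular matrix; both verifications are immediate and equivalent.
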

\begin{proof}
Note that
\begin{equation}
\begin{pmatrix}
A&B\\
C&D
\end{pmatrix}
\begin{pmatrix}
I&0\\
-D^{-1}C&I
\end{pmatrix}
=\begin{pmatrix}
A-BD^{-1}C&B\\
0&D
\end{pmatrix},
\end{equation}
and that the determinant of a block-triangular matrix is the product of the determinants of its diagonal blocks.
\end{proof}

\begin{prop}\label{corner}
Let $T$ be a $N$-by-$N$ tridiagonal matrix and let $p$ be a polynomial of degree $d$. Let $v$ be the $N$-by-$1$ column vector with a $1$ in its last entry and $0$ elsewhere. Then the $(i,j)$ entries of $p(T)$ and $p(T+vv^\top)$ agree when $i+j\le 2(N-1)-d$.
\end{prop}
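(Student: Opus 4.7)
The plan is to expand $p(T+vv^\top)-p(T)$ monomial by monomial, isolate the ``correction'' terms containing the rank-one perturbation $E:=vv^\top$, and then use the bandedness of powers of a tridiagonal matrix to show those corrections vanish in the prescribed region.

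By linearity it suffices to treat monomials $p(x)=x^k$ with $k\le d$. Expanding $(T+E)^k$ as a non-commutative binomial, the term with no factor of $E$ is $T^k$, so $(T+E)^k-T^k$ is a sum of terms of the form
\begin{equation*}
T^{a_0}\,E\,T^{a_1}\,E\cdots E\,T^{a_m},\qquad m\ge 1,\quad a_0+a_1+\cdots+a_m+m=k,\quad a_l\ge 0.
\end{equation*}
Because $E=vv^\top$ with $v=e_{N-1}$, each interior block collapses to a scalar $v^\top T^{a_l}v=(T^{a_l})_{N-1,N-1}$, and the $(i,j)$ entry of such a term factors as
\begin{equation*}
(T^{a_0})_{i,N-1}\cdot\prod_{l=1}^{m-1}(T^{a_l})_{N-1,N-1}\cdot(T^{a_m})_{N-1,j}.
\end{equation*}

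Now I use the standard fact that a tridiagonal matrix $T$ has banded powers: $(T^a)_{p,q}=0$ whenever $|p-q|>a$, since each length-$a$ walk in the graph of $T$ moves indices by at most one per step. Applied to the outer factors, $(T^{a_0})_{i,N-1}=0$ unless $a_0\ge N-1-i$, and $(T^{a_m})_{N-1,j}=0$ unless $a_m\ge N-1-j$. Consequently a nonzero contribution forces
\begin{equation*}
k\;=\;a_0+a_1+\cdots+a_m+m\;\ge\;(N-1-i)+(N-1-j)+m\;\ge\;2(N-1)-(i+j)+1.
\end{equation*}

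Thus under the hypothesis $i+j\le 2(N-1)-d$ we would need $k\ge d+1$, contradicting $k\le d$. Every correction term vanishes at position $(i,j)$, so $p(T)_{ij}=p(T+vv^\top)_{ij}$ there. The only mildly delicate point is the bookkeeping that turns the walk bound $(T^a)_{p,q}=0$ for $|p-q|>a$ into the combined inequality for $k$, but this is immediate once the monomial is written in the factored form above.
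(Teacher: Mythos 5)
Your proof is correct and follows essentially the same route as the paper's: expand $(T+vv^\top)^k-T^k$ into noncommutative words containing at least one factor $vv^\top$, and show each such word is supported in the bottom-right corner $i+j>2(N-1)-d$. The only difference is cosmetic --- the paper propagates a ``$k$-small'' invariant inductively through each left or right multiplication, whereas you verify the same support constraint directly via the rank-one collapse through $e_{N-1}$ together with the bandwidth bound $(T^a)_{pq}=0$ for $|p-q|>a$.
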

\begin{proof}
It suffices to prove this for $p(x)=x^d$. Call a $N$-by-$N$ matrix ``$k$-small" iff its entries $(i,j)$ with $i+j\le 2(N-1)-k$ are all $0$. For instance, $vv^\top$ is $1$-small.

Suppose a matrix $M$ is $k$-small. For $i+j\le 2(N-1)-k-1$, the $(i,j)$ entry of $TM$ is $\sum_{l=0}^{N-1}T_{il}M_{lj}=T_{i,i-1}M_{i-1,j}+T_{i,i}M_{i,j}+T_{i,i+1}M_{i+1,j}$. Since $M$ is $k$-small, its $(i-1,j)$, $(i,j)$, and $(i+1,j)$ entries are $0$, which implies that $TM$ is $(k+1)$-small. Similarly, $MT$, $vv^\top M$, and $Mvv^\top$ are $(k+1)$-small.

Consider $(T+vv^\top)^d-T^d$. Expanding the binomial product yields $2^d-1$ terms, all of which are products of $d$ $T$'s and $vv^\top$'s and contain at least one $vv^\top$. It follows from the above that each of these terms is $d$-small, so $p(T+vv^\top)-p(T)$ is $d$-small.
\end{proof}

\begin{lem}\label{B}
The inverse of $(\beta_N+2)$ is $(\frac{1}{2}(-1)^{i+j}(2\min\{i,j\}+1))_{i,j=0}^{N-1}$. The determinant of $(\beta_N+2)$ is $2$. For example,
\begin{equation}
(\beta_5+2)^{-1}=\frac{1}{2}\begin{pmatrix}
\phantom{-}1&-1&\phantom{-}1&-1&\phantom{-}1\\
-1&\phantom{-}3&-3&\phantom{-}3&-3\\
\phantom{-}1&-3&\phantom{-}5&-5&\phantom{-}5\\
-1&\phantom{-}3&-5&\phantom{-}7&-7\\
\phantom{-}1&-3&\phantom{-}5&-7&\phantom{-}9
\end{pmatrix}.
\end{equation}
\end{lem}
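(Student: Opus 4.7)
The plan is to make the shape of $\beta_N+2I$ explicit, verify the inverse by direct multiplication, and read off the determinant from a short tridiagonal recurrence.

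First, decode $\beta_N$ from its definition. Writing $\sigma_{2N}(-1)=\sigma_{2N}(0)-e_0e_{2N-1}^\top$ and noting that $Q_Ne_0$ (the first column of $J_N$) and $Q_Ne_{2N-1}$ (the last column of $I_N$) both equal $e_{N-1}$, one obtains $\beta_N=\gamma_N-e_{N-1}e_{N-1}^\top$. Combined with the description of $\gamma_N$ as tridiagonal with diagonal $(1,0,\ldots,0)$ and off-diagonals equal to $1$, this gives that $\beta_N+2I$ is tridiagonal with diagonal $(3,2,2,\ldots,2,1)$ and off-diagonals all $1$.

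Second, verify the inverse by multiplication. Let $M=((-1)^{i+j}(2\min\{i,j\}+1))_{i,j=0}^{N-1}$ and check $(\beta_N+2)M=2I$. Since every row of $\beta_N+2$ has at most three nonzero entries, this splits into three row types. For each interior row $1\le i\le N-2$,
\begin{equation*}
M_{i-1,j}+2M_{i,j}+M_{i+1,j}=-2(-1)^{i+j}\bigl(\min\{i-1,j\}-2\min\{i,j\}+\min\{i+1,j\}\bigr),
\end{equation*}
and the parenthesised quantity is the discrete second difference at $k=i$ of the piecewise-linear concave function $k\mapsto\min\{k,j\}$; this vanishes except at the kink $k=j$, where it equals $-1$. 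Hence each interior row contributes exactly $2\delta_{ij}$. The two boundary rows, $3M_{0,j}+M_{1,j}$ and $M_{N-2,j}+M_{N-1,j}$, yield $2\delta_{0,j}$ and $2\delta_{N-1,j}$ respectively by one-line calculations of the same flavor.

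Third, for the determinant apply the classical tridiagonal recurrence. Let $p_k$ denote the determinant of the top-left $k\times k$ submatrix of $\beta_N+2$; then $p_1=3$, $p_2=5$, and $p_k=2p_{k-1}-p_{k-2}$ for $k\ge 2$, so $p_k=2k+1$ by induction. Expanding $\det(\beta_N+2)$ along the last row gives $p_{N-1}-p_{N-2}=2$, with $N=1$ settled by inspection since $\beta_1+2=(2)$. The only mild obstacle is the case bookkeeping in the inverse verification, which dissolves once the second-difference viewpoint is adopted.
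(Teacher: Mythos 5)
Your proposal is correct and follows essentially the same route as the paper: both make $\beta_N+2$ explicit as the tridiagonal matrix with diagonal $(3,2,\dots,2,1)$ and verify the claimed inverse by direct row-times-column multiplication, splitting into interior and boundary rows (your second-difference phrasing of the interior case is a clean way to state the paper's cancellation). The only minor divergence is the determinant, where the paper evaluates the characteristic polynomial $b_N$ at $-2$ via the recurrence of Section \ref{relate} while you run the standard continuant recurrence $p_k=2p_{k-1}-p_{k-2}$ on leading principal minors; both are equally elementary and give $2$.
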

\begin{proof}
For $i\neq 0, N-1$ the row $i$ of $(\beta_N+2)$ is $(2\delta_{il}+\delta_{i,l-1}+\delta_{i,l+1})_{l=0}^{N-1}$. The product of this with column $j$ of the claimed inverse is
\begin{equation}
\begin{split}
& \sum_{l=0}^{N-1}(2\delta_{il}+\delta_{i,l-1}+\delta_{i,l+1})\frac{1}{2}(-1)^{l+j}(2\min\{l,j\}+1)\\
& =\frac{1}{2}(-1)^{i+j}(4\min\{i,j\}+2-2\min\{i+1,j\}-1-2\min\{i-1,j\}-1)\\
& =(-1)^{i+j}(2\min\{i,j\}-\min\{i+1,j\}-\min\{i-1,j\}).
\end{split}
\end{equation}
This is $0$ if $i+1\le j$ or $i-1\ge j$ and is $1$ if $i=j$.

The first row of $(\beta_N+2)$ is $(3,1,0,\dots,0)$, and the last row is $(0,\dots,0,1,1)$. Column $j\neq 0,N-1$ of the claimed inverse begins and ends as
\begin{equation}
\frac{1}{2}((-1)^j,(-1)^{j+1}3,\dots,(-1)^{j+N-2}(2j+1),(-1)^{j+N-1}(2j+1)),
\end{equation}
so it kills the first and last rows of $(\beta_N+2)$. Column $0$ of the claimed inverse begins and ends as $\frac{1}{2}(1,-1,\dots,(-1)^{N-2},(-1)^{N-1})$ while column $N-1$ begins and ends as $\frac{1}{2}((-1)^{N-1},(-1)^N 3,\dots,-(2N-3),2N-1)$. It's easy to verify that these columns have the correct products with rows of $(\beta_N+2)$.

The determinant $\det(\beta+2)$ is $(-1)^Nb_N(-2)$, which can be computed with recurrence in Section \ref{relate} to be $2$.
\end{proof}

\begin{lem}\label{gka}
For $k<N$, the $(i,j)$ entry of $g_k(\alpha_N)$ is $(-1)^{i+j+k}$ if $k\le i+j\le 2N-k-2$ and $|i-j|\le k$ and is $0$ otherwise. The $(i,j)$ entry of $g_k(\beta_N)$ is $(-1)^{i+j+k}$ if $k\le i+j\le 2N-k-2$ and $|i-j|\le k$, is $2(-1)^{i+j+k}$ if $2N-k-1\le i+j$, and is $0$ otherwise. For example,
\begin{equation}
g_2(\beta_6)=\begin{pmatrix}
\phantom{-}0&\phantom{-}0&\phantom{-}1&\phantom{-}0&\phantom{-}0&\phantom{-}0\\
\phantom{-}0&\phantom{-}1&-1&\phantom{-}1&\phantom{-}0&\phantom{-}0\\
\phantom{-}1&-1&\phantom{-}1&-1&\phantom{-}1&\phantom{-}0\\
\phantom{-}0&\phantom{-}1&-1&\phantom{-}1&-1&\phantom{-}1\\
\phantom{-}0&\phantom{-}0&\phantom{-}1&-1&\phantom{-}1&-2\\
\phantom{-}0&\phantom{-}0&\phantom{-}0&\phantom{-}1&-2&\phantom{-}2
\end{pmatrix}.
\end{equation}
\end{lem}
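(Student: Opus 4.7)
My plan is to determine the structure of $g_k(\alpha_N)$ first, by combining a closed form for the entries of the infinite matrix $g_k(\gamma)$, Proposition~\ref{corner}, and the anti-diagonal symmetry of $\alpha_N$, and then to read off $g_k(\beta_N)$ from Theorem~\ref{abg}.

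The first step is to pin down $(g_k(\gamma))_{ij}$. I would use \eqref{gngk} in the telescoped form $g_k(\gamma) = \gamma^{(k)} - g_{k-1}(\gamma)$ (with base $g_0(\gamma) = I$), together with the explicit description $(\gamma^{(k)})_{ij} = [|i-j|=k \text{ or } i+j=k-1]$ from Definition~4.1, and induct on $k$ to establish
\begin{equation*}
(g_k(\gamma))_{ij} \;=\; \begin{cases} (-1)^{i+j+k} & \text{if } i+j \ge k \text{ and } |i-j| \le k,\\ 0 & \text{otherwise}. \end{cases}
\end{equation*}
Each inductive step is a small case analysis: on the new band $|i-j|=k$ the $+1$ from $\gamma^{(k)}$ supplies the band entry; on the boundary $i+j=k-1$ the $+1$ is cancelled by the inductive entry of $g_{k-1}(\gamma)$, which is $1$ there; and off both loci the recursion simply negates the previous entry.

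Second, I would move from $g_k(\gamma)|_N$ to $g_k(\alpha_N)$ on the corner region $i+j \le 2N-2-k$. A direct computation of $Q_N \sigma_{2N}(1) Q_N^\top$ shows $\alpha_N = \gamma|_N + e_{N-1} e_{N-1}^\top$, and $\gamma|_N$ is tridiagonal, so Proposition~\ref{corner} with $T=\gamma|_N$, $v=e_{N-1}$, and $p=g_k$ (degree $k$) gives $g_k(\alpha_N) = g_k(\gamma|_N)$ on $i+j \le 2N-2-k$. The same binomial-expansion argument applied to $\beta_N = \gamma|_N - e_{N-1}e_{N-1}^\top$ gives $g_k(\beta_N) = g_k(\gamma|_N)$ on that region, so averaging and invoking Theorem~\ref{abg} identifies $g_k(\gamma|_N)$ with $g_k(\gamma)|_N$ there. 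The complementary region $i+j \ge 2N-k-1$ is then handled by the symmetry $J_N \alpha_N = \alpha_N J_N$, visible directly from the entries of $\alpha_N$; this commutation passes to any polynomial in $\alpha_N$, and together with the main-diagonal symmetry it yields $(g_k(\alpha_N))_{ij} = (g_k(\alpha_N))_{N-1-i,N-1-j}$. The reflected pair has index sum $\le k-1 < k$, where the first step supplies value $0$.

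Finally, Theorem~\ref{abg} rewrites $g_k(\beta_N) = 2 g_k(\gamma)|_N - g_k(\alpha_N)$, which I would evaluate on the three regions $i+j \le k-1$, $k \le i+j \le 2N-k-2$, and $i+j \ge 2N-k-1$ to read off the claimed entries; in the last region the constraint $|i-j| \le k$ holds for free because $i,j \ge N-k$ force $|i-j| \le k-1$, and so the factor of $2$ in $2g_k(\gamma)|_N$ produces the doubled entries. The main technical hurdle I anticipate is the bridge in the corner region among $g_k(\alpha_N)$, $g_k(\gamma|_N)$, and $g_k(\gamma)|_N$ — three $N \times N$ matrices that Proposition~\ref{corner} does not equate directly — but applying it to both $\alpha_N$ and $\beta_N$ and then averaging via Theorem~\ref{abg} collapses all three on the corner and should suffice.
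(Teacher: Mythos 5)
Your proposal is correct and follows essentially the same route as the paper's proof: the alternating-sum/recursive closed form for the entries of $g_k(\gamma)$, Proposition~\ref{corner} to identify $g_k(\alpha_N)$ with the truncation on the region $i+j\le 2N-k-2$, anti-diagonal symmetry of polynomials in $\alpha_N$ for the remaining corner, and $g_k(\beta_N)=2g_k(\gamma)|_N-g_k(\alpha_N)$ from Theorem~\ref{abg}. Your explicit handling of the distinction between $g_k(\gamma|_N)$ and $g_k(\gamma)|_N$ (by applying Proposition~\ref{corner} to both $\alpha_N$ and $\beta_N$ and averaging) is a point the paper leaves implicit, but it is the same argument.
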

\begin{proof}
Recall that $g_j(\gamma)=\gamma^{(j)}-\gamma^{(j-1)}+\cdots\pm\gamma^{(1)}\mp 1$, by \eqref{gngk}. Therefore $\frac{1}{2}(g_k(\alpha_N)+g_k(\beta_N))=g_k(\gamma)|_N=\gamma_N^{(k)}-\gamma_N^{(k-1)}+\cdots\pm\gamma_N^{(1)}\mp 1$. From the definition of the $\gamma_N^{(j)}$, the $(i,j)$ entry of $g_k(\gamma)|_N$ is $(-1)^{i+j+k}$ if $k\le i+j$ and $|i-j|\le k$ and is $0$ otherwise.

Note that polynomials in $\alpha_N$ are symmetric about their anti-diagonal. Since the degree of $g_k$ is $k<N$, Prop \ref{corner} says that $g_k(\alpha_N)$ agrees with $g_k(\gamma)|_N$ on and above its anti-diagonal. Thus, the $(i,j)$ entry of $g_k(\alpha_N)$ is $(-1)^{i+j+k}$ if $k\le i+j\le 2N-k-2$ and $|i-j|\le k$ and is $0$ otherwise. Similarly, the $(i,j)$ entry of $g_k(\beta_N)=2g_k(\gamma)|_N-g_k(\alpha_N)$ is $(-1)^{i+j+k}$ if $k\le i+j\le 2N-k-2$ and $|i-j|\le k$, $2(-1)^{i+j+k}$ if $2N-k-1\le i+j$, and $0$ otherwise.
\end{proof}

\vspace{2 pt}

\begin{lem}\label{detgka}
\begin{equation}
\det g_k(\gamma)|_N=\begin{cases}
1&\text{if } N=(2k+1)n\\
(-1)^{k+1\choose 2}&\text{if } N=(2k+1)n+k+1\\
0&\text{otherwise.}
\end{cases}
\end{equation}
\end{lem}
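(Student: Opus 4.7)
The plan is to establish the recursion $\det g_k(\gamma)|_N = \det g_k(\gamma)|_{N-2k-1}$ for $N \ge 2k+1$ via two successive Schur complements (Prop \ref{block}), and then verify the base cases $0 \le N \le 2k$ directly.

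Set $M_N := g_k(\gamma)|_N$ and write $M_N = \begin{pmatrix}A & B \\ B^\top & D\end{pmatrix}$ with $A$ the top-left $(k+1)\times(k+1)$ submatrix. By Lemma \ref{gka}, $A$ is anti-triangular with $+1$'s on its anti-diagonal, so $\det A=(-1)^{\binom{k+1}{2}}$; one checks directly that $A^{-1}_{ij}=[i+j\in\{k-1,k\}]$. The block $B$ is sparse: $B_{ij}=(-1)^{i+j+1}$ only when $j<i$. A short telescoping calculation, collapsing via the identity $\max(0,s-1)-\max(0,s-2)=[s\ge 1]$, yields $(B^\top A^{-1}B)_{pq}=(-1)^{p+q+k}[p+q\le k-2]$, and hence the first Schur complement $S_1 := D - B^\top A^{-1}B$ has entries $(-1)^{p+q+k}$ on the band $|p-q|\le k$ with $p+q\ge k-1$, and is zero elsewhere.

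Now repeat: let $A^{(1)}$ be the top-left $k\times k$ submatrix of $S_1$. An analogous anti-triangular analysis (this time with $-1$'s on the anti-diagonal) gives $\det A^{(1)}=(-1)^{k-\binom{k}{2}}$ and $(A^{(1)})^{-1}_{mj}=-[m+j\in\{k-2,k-1\}]$, and the same kind of telescoping computation shows that the second Schur complement $S_2$ has exactly the defining structure of $M_{N-2k-1}$ from Lemma \ref{gka}. Combining the two block-determinant formulas,
\[
\det M_N = \det A \cdot \det A^{(1)} \cdot \det M_{N-2k-1} = (-1)^{\binom{k+1}{2}+k-\binom{k}{2}}\det M_{N-2k-1} = \det M_{N-2k-1},
\]
since $\binom{k+1}{2}-\binom{k}{2}=k$ makes the exponent $2k$ and the two sign factors cancel, giving a completely sign-free recursion.

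For the base cases $0\le N\le 2k$: $N=0$ gives the empty determinant $1$; for $1\le N\le k$ row $0$ of $M_N$ is zero (its only potential nonzero entry would sit at column $k$, outside the truncation), so $\det=0$; at $N=k+1$ we have $M_{k+1}=A$, giving $(-1)^{\binom{k+1}{2}}$; and for $k+2\le N\le 2k$ rows $k-1$ and $k+1$ both have support $\{1,\dots,N-1\}$ (the upper end is capped at $N-1$ for both since $N\le 2k$), with identical signs (indices differ by $2$), so they coincide and $\det=0$. Combined with the recursion this matches the claimed formula exactly. The main technical obstacle is the pair of telescoping Schur-complement computations; each is elementary but requires careful index tracking, and the fortuitous sign-cancellation $(-1)^{\binom{k+1}{2}+k-\binom{k}{2}}=1$ is what produces the clean period-$(2k+1)$ pattern.
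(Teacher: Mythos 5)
Your proposal is correct, and its skeleton is the same as the paper's: verify the cases $0\le N\le 2k$ directly (identical arguments --- empty determinant, a vanishing row/column, the anti-triangular $N=k+1$ case, and the coincidence of rows $k\pm1$), and establish the sign-free recursion $\det g_k(\gamma)|_N=\det g_k(\gamma)|_{N-2k-1}$ for $N\ge 2k+1$ via Proposition \ref{block}. The execution of the reduction step is where you diverge. The paper takes the Schur complement with respect to the single bottom-right entry $M_{22}=(-1)^k$; this $1\times1$ pivot makes the complement trivial to compute, zeroes out the bottom-right $k\times k$ corner, and the remaining $2k$ rows and columns are then stripped off by cofactor expansion along lone $1$'s, with the permutation sign $(-1)^k$ cancelling against $\det(M_{22})$. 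You instead peel from the top-left with two successive block Schur complements of sizes $k+1$ and $k$, which requires exhibiting the explicit inverses $A^{-1}_{ij}=[i+j\in\{k-1,k\}]$ and $(A^{(1)})^{-1}_{mj}=-[m+j\in\{k-2,k-1\}]$ of the anti-triangular corner blocks and carrying out the two telescoping computations of $B^\top A^{-1}B$; I checked these, and they are correct (e.g.\ $(B^\top A^{-1}B)_{pq}=(-1)^{p+q+k}[p+q\le k-2]$, so $S_1$ and then $S_2$ have exactly the claimed supports), and the total sign $(-1)^{\binom{k+1}{2}+\binom{k}{2}+k}=(-1)^{k(k+1)}=1$ comes out right. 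Your route costs more explicit linear algebra (two inverse formulas and two convolution computations) but keeps everything inside the block-determinant formalism with no permutation-sign bookkeeping; the paper's route is shorter because the pivot is a scalar. One cosmetic point: Proposition \ref{block} as stated pivots on the bottom-right block $D$, whereas you pivot on the top-left block $A$, so you should either cite the (standard, equivalent) top-left variant or conjugate by the exchange matrix before applying it.
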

\begin{proof}
When $N=0$ the determinant is vacuously $1$. When $0<N<k+1$, the first column is $0$. When $N=k+1$ the matrix is $0$ above its antidiagonal and $1$ on its antidiagonal, so its determinant is $(-1)^{k+1\choose 2}$. When $k+1<N<2k+1$, columns $k-1$ and $k+1$ are equal. Thus the claim holds for all $N<2k+1$. We'll show that for $N\ge 2k+1$, $\det g_k(\gamma)|_N=\det g_k(\gamma)|_{N-2k-1}$.

Fix $N\ge 2k+1$ and let $M=g_k(\gamma)|_N$. Subdivide $M$ into a block matrix consisting of the leading principal order-$N-1$ submatrix $M_{11}$, the bottom-right entry $M_{22}$, and the remainders of the last column and row $M_{12}$ and $M_{21}$. The determinant of $M$ is $\det(M_{22})\det(M')$, where $M'$ is the $N-1$-by-$N-1$ matrix $M_{11}-M_{12}M_{22}^{-1}M_{21}$ by Proposition \ref{block}.

We will perform cofactor expansion in the bottom right of $M'$. Since $M_{22}=(-1)^k$, the bottom right $k$-by-$k$ submatrix of $M'$ is the zero matrix. As a result, the only entry in the bottom row of $M'$ is the $1$ at $(N-2,N-k-2)$. After deleting its row and column, the only entry in the bottom row of $M'$ is the $1$ at $(N-3,N-k-3)$. This pattern continues up to the $1$ at $(N-k-1,N-2k-1)$. Since $M'$ is symmetric, a similar sequence of lone $1$'s can be removed in the last $k$ columns.

After the last $2k$ rows and columns have been removed, $M'$ has been reduced to $g_k(\gamma)|_{N-2k-1}$. The $2k$ removed $1$'s contribute a factor of $(-1)^k$ to the determinant, which comes from the parity of the permutation $(0\;k)(1\; k+1)\cdots(k-1\;2k)$. This cancels with the sign of $M_{22}$.
\end{proof}

\begin{lem}\label{detbka}
For $k<N$, the $(i,j)$ entry of $b_k(\alpha_N)$ is $1$ if $|i-j|=k$, $i+j=k-1$, or $i+j=2(N-1)-(k-1)$ and is $0$ otherwise. The $(i,j)$ entry of $b_k(\beta_N)$ is $1$ if $|i-j|=k$ or $i+j=k-1$, is $-1$ if $i+j=2(N-1)-(k-1)$, and is $0$ otherwise. In particular $b_k(\gamma)=\gamma^{(k)}$. Moreover,
\begin{equation}\det b_k(\gamma)|_N=\begin{cases}
(-1)^{kn}&\text{if } N=2kn\\
(-1)^{kn+{k\choose 2}}&\text{if } N=2kn+k\\
0&\text{otherwise.}
\end{cases}\end{equation}
\end{lem}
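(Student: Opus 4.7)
The plan is to prove the four assertions in sequence, mirroring the structure of Lemma \ref{gka} and Lemma \ref{detgka}. The identity $b_k(\gamma) = \gamma^{(k)}$ is immediate: Section \ref{relate} already gives $b_k(x) = L_k(x)$, and equation \eqref{lkg} says $L_k(\gamma) = \gamma^{(k)}$. This identity in turn reduces the determinant formula to Lemma \ref{detckn}, whose three cases match the three cases claimed here verbatim.

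Next I would determine the entries of $b_k(\alpha_N)$. Since $\deg b_k = k < N$, Proposition \ref{corner} guarantees that $b_k(\alpha_N)$ and $b_k(\gamma)|_N = \gamma_N^{(k)}$ agree at every position $(i,j)$ with $i + j \le 2(N-1) - k$. In that region, $\gamma_N^{(k)}$ has a $1$ exactly when $|i-j| = k$ or $i + j = k-1$, and a $0$ otherwise, which accounts for the first two types of $1$'s in the claim.

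For positions below that threshold, i.e., $i + j \ge 2(N-1) - k + 1$, I would use the fact that polynomials in $\alpha_N$ are symmetric across the anti-diagonal (as invoked in the proof of Lemma \ref{gka}). This maps such an $(i,j)$ to the mirror $(N-1-j,\,N-1-i)$, whose indices sum to $2(N-1) - (i+j) \le k-1$. For such a small index sum, the condition $|i'-j'| = k$ cannot hold, so $\gamma_N^{(k)}$ contributes a $1$ there exactly when $i' + j' = k-1$, translating back to $i + j = 2(N-1) - (k-1)$. A quick side check confirms that any position with $|i-j| = k$ automatically satisfies $i+j \le 2(N-1) - k$, so the two regions do not interfere.

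For $b_k(\beta_N)$, Theorem \ref{abg} yields $b_k(\beta_N) = 2\gamma_N^{(k)} - b_k(\alpha_N)$. Where $b_k(\alpha_N)$ equals $\gamma_N^{(k)}$ (the "above" region), this simplifies back to $\gamma_N^{(k)}$, giving the $1$'s at $|i-j|=k$ and $i+j = k-1$. In the "below" region $\gamma_N^{(k)}$ vanishes by the same index-sum argument, so $b_k(\beta_N) = -b_k(\alpha_N)$, flipping the mirror $1$'s to $-1$'s. There is no genuine obstacle in this argument since it parallels Lemma \ref{gka} step for step; the only bookkeeping is to verify the boundary between the "above" and "below" regions cleanly, which is handled by the observation that $|i-j|=k$ forces $i+j \le 2(N-1)-k$.
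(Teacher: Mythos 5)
Your proof is correct. The parts concerning $b_k(\gamma)=\gamma^{(k)}$ and the determinant formula coincide exactly with the paper's argument (both reduce to $b_k=L_k$, equation \eqref{lkg}, and Lemma \ref{detckn}). For the entries of $b_k(\alpha_N)$ and $b_k(\beta_N)$, however, you take a genuinely different route: the paper simply writes $b_k(x)=g_k(x)+g_{k-1}(x)$ and adds the two entry patterns already established in Lemma \ref{gka} (the checkerboards $(-1)^{i+j+k}$ and $(-1)^{i+j+k-1}$ cancel on the overlap of their supports and leave $+1$'s precisely on $|i-j|=k$, $i+j=k-1$, and $i+j=2(N-1)-(k-1)$), whereas you rerun the mechanism behind Lemma \ref{gka} directly on $b_k$: Proposition \ref{corner} to identify $b_k(\alpha_N)$ with $\gamma_N^{(k)}$ on and above the anti-diagonal, anti-diagonal symmetry of polynomials in $\alpha_N$ to fill in the lower corner, and Theorem \ref{abg} to get $b_k(\beta_N)=2\gamma_N^{(k)}-b_k(\alpha_N)$. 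Both arguments are valid and rest on the same underlying toolkit; the paper's version is shorter because it reuses Lemma \ref{gka} as a black box, while yours is self-contained, avoids the sign bookkeeping of summing two alternating patterns, and makes explicit the boundary check (that $|i-j|=k$ forces $i+j\le 2(N-1)-k$) that the paper leaves implicit.
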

\begin{proof}
The first set of claims follow from the Lemma \ref{gka} and the fact that $b_k(x)=g_k(x)+g_{k-1}(x)$. The determinant of $\gamma^{(k)}$ was calculated in Lemma \ref{detckn}.
\end{proof}

\section{Calculation of the determinant}

In this section we prove the seven formulas mentioned in the introduction. Recall Theorem \ref{Gamma} and its notation.

Let $\mu_i=\frac{1}{2}((\alpha_N+2)^ih_r(\alpha_N)+(\beta_N+2)^ih_r(\beta_N))$ for $0\le i\le l$. From here on we'll suppress the subscripts on $\alpha_N$ and $\beta_N$. By Theorem \ref{Gamma}, we're interested in calculating $d_{r}(N)=\det\mu_l$. Note that
\begin{equation}\label{mui}
\mu_{i+1}=\mu_i(\beta+2)+ (\alpha+2)^ih_r(\alpha)vv^\top.
\end{equation}
The results of the previous section give us control over $\mu_0$. We will induct on the above equation to screw the smoothing operators $\alpha+2$ and $\beta+2$ into place, using the matrix determinant lemma to keep track of the determinants. In the seven cases proven here, the determinant or adjugate of $\mu_i$ is multiplied by a constant factor at each step.

\begin{prop}[Matrix determinant lemma]
If $A$ is an $n$-by-$n$ matrix and $u$ and $v$ are $n$-by-$1$ column vectors, then
\begin{equation}
\det(A+uv^\top)=\det(A)+v^\top\adj(A)u.
\end{equation}
\end{prop}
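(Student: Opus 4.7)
The plan is to give a proof that works uniformly for all $A$ (singular or not) by exploiting multilinearity of the determinant together with the rank-one structure of $uv^\top$. View $\det$ as a multilinear alternating function of the columns of its argument. The $i$-th column of $A + uv^\top$ is $A_i + v_i u$, so expanding column-by-column produces a sum over subsets $S \subseteq \{1,\dots,n\}$ of terms in which the columns indexed by $S$ are replaced by scalar multiples of $u$. Any term with $|S|\ge 2$ contains two copies of $u$ and therefore vanishes by the alternating property.

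Consequently only the empty subset and the $n$ singletons contribute, giving
\begin{equation}
\det(A+uv^\top)=\det(A)+\sum_{i=1}^n v_i\,\det\!\bigl[A_1,\dots,A_{i-1},u,A_{i+1},\dots,A_n\bigr].
\end{equation}
Expanding the $i$-th determinant along its $i$-th column yields $\sum_j (-1)^{i+j} u_j M_{ji}$, where $M_{ji}$ is the $(j,i)$-minor of $A$. By the standard definition $\adj(A)_{ij}=(-1)^{i+j}M_{ji}$, so this equals $\sum_j \adj(A)_{ij}\,u_j$. Substituting and swapping the order of summation gives $\sum_{i,j} v_i\,\adj(A)_{ij}\,u_j = v^\top \adj(A)\,u$, which completes the identity.

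As an alternative route, one can first assume $A$ is invertible and use the block factorization
\begin{equation}
\begin{pmatrix} A & -u \\ v^\top & 1 \end{pmatrix},
\end{equation}
whose determinant computed by Schur complement with respect to the $(2,2)$-block equals $\det(A+uv^\top)$, while computing it with respect to the $(1,1)$-block gives $\det(A)(1+v^\top A^{-1}u)=\det(A)+v^\top\adj(A)u$ (using $\adj(A)=\det(A)A^{-1}$). One then extends to arbitrary $A$ by noting that both sides are polynomials in the entries of $A$, $u$, $v$, and the invertible matrices are Zariski-dense.

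There is no real obstacle here; the only subtle point is the extension to singular $A$, which is why I would prefer the multilinearity argument, which avoids invertibility entirely and keeps everything at the level of basic properties of $\det$ and $\adj$.
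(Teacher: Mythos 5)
Your multilinearity argument is correct and complete, and it takes a genuinely different route from the paper. The paper proves the lemma exactly as in your ``alternative route'': it observes that both sides are polynomials in the entries of $A$, $u$, $v$, reduces to the dense set of invertible $A$, and uses the block factorization
\begin{equation}
\begin{pmatrix}I&0\\v^\top&1\end{pmatrix}\begin{pmatrix}I+A^{-1}uv^\top&u\\0&1\end{pmatrix}\begin{pmatrix}I&0\\-v^\top&1\end{pmatrix}=\begin{pmatrix}I&u\\0&1+v^\top A^{-1}u\end{pmatrix}
\end{equation}
to get $\det(I+A^{-1}uv^\top)=1+v^\top A^{-1}u$, then multiplies through by $\det A$ and replaces $\det(A)A^{-1}$ by $\adj(A)$. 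Your primary argument instead expands $\det(A+uv^\top)$ as an alternating multilinear function of the columns, kills every term containing two copies of $u$, and identifies the surviving singleton terms with $v^\top\adj(A)u$ via cofactor expansion. What your approach buys is a direct proof valid for all $A$ with no appeal to density or continuity --- the identity falls out of the definitions of $\det$ and $\adj$ alone, which is arguably cleaner given that the paper goes on to apply the lemma precisely in the singular case (Lemma 8.5). What the paper's approach buys is brevity and a conceptual link to the Schur-complement machinery (Proposition 7.1) already in use elsewhere in the argument. Both are sound; either would serve.
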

\begin{proof}
This is a polynomial identity in the entries of $A$, $u$, and $v$, so it suffices to prove it for the dense subset where $A$ is invertible. Consider
\begin{equation}
\begin{pmatrix}I&0\\v^\top&1\end{pmatrix}\begin{pmatrix}I+A^{-1}uv^\top&u\\0&1\end{pmatrix}\begin{pmatrix}I&0\\-v^\top&1\end{pmatrix}=\begin{pmatrix}I&u\\0&1+v^\top A^{-1}u\end{pmatrix},
\end{equation}
which shows that $1\cdot\det(I+A^{-1}uv^\top)\cdot 1=\det(1+v^\top A^{-1}u)$. Multiplying through by $\det A$ yields $\det(A+uv^\top)=\det(A)(1+v^\top A^{-1} u)=\det(A)+v^\top\adj(A)u$.
\end{proof}

\subsection{The case that $\mu_0$ is invertible}

\begin{lem}\label{main1}
Suppose there is an $N$-dimensional column vector $w$ such that $\mu_0w=h_r(\alpha_N)v$ and that the last $l-1$ entries of $h_r(\beta_N)w$ are $0$. Then
\begin{equation}
\det(\mu_l)=\det(\mu_0)2^l\left(1+v^\top(\beta_N+2)^{-1}w\right)^l.
\end{equation}
\end{lem}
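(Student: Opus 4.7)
The plan is induction on $i$ from $0$ up to $l-1$, aiming to show that each step of the recurrence \eqref{mui} multiplies $\det(\mu_i)$ by the constant factor $2c$, where $c:=1+v^\top(\beta_N+2)^{-1}w$. Suppressing subscripts on $\alpha_N,\beta_N$ and writing $u_i:=(\alpha+2)^ih_r(\alpha)v$, equation \eqref{mui} reads $\mu_{i+1}=\mu_i(\beta+2)+u_iv^\top$.

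Applying the matrix determinant lemma with $A=\mu_i(\beta+2)$, $u=u_i$, together with $\det(\beta+2)=2$ from Lemma \ref{B} and $\adj(AB)=\adj(B)\adj(A)$, I obtain
\[
\det(\mu_{i+1})=2\det(\mu_i)+2v^\top(\beta+2)^{-1}\adj(\mu_i)u_i.
\]
The decisive reduction is $\adj(\mu_i)u_i=\det(\mu_i)\,w$, and since $\adj(\mu_i)\mu_i=\det(\mu_i)I$ always holds, this follows immediately from the identity $\mu_i w=u_i$. Granted the latter for every $i\le l-1$, the recurrence collapses to $\det(\mu_{i+1})=2c\det(\mu_i)$, and iterating $l$ times from $i=0$ yields the claimed $\det(\mu_l)=2^lc^l\det(\mu_0)$.

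The bulk of the work is therefore to verify $\mu_iw=u_i$ for $0\le i\le l-1$. The hypothesis $\mu_0w=h_r(\alpha)v$, combined with $2\mu_0=h_r(\alpha)+h_r(\beta)$, rewrites as $h_r(\alpha)w=2h_r(\alpha)v-h_r(\beta)w$. Substituting this into $2\mu_iw=(\alpha+2)^ih_r(\alpha)w+(\beta+2)^ih_r(\beta)w$ and simplifying gives the clean telescoping identity
\[
\mu_iw-u_i=\tfrac12\bigl((\beta+2)^i-(\alpha+2)^i\bigr)h_r(\beta)w.
\]
Now $Q_Ne_0=Q_Ne_{2N-1}=v$ (the first column of $J_N$ and the last column of $I_N$ both equal $e_{N-1}$), so $\alpha-\beta=2vv^\top$ and $\alpha+2=(\beta+2+vv^\top)+vv^\top$ is obtained from the tridiagonal matrix $\beta+2$ by two successive additions of $vv^\top$ (both intermediate matrices remain tridiagonal). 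Two applications of Proposition \ref{corner} then give that $(\alpha+2)^i$ and $(\beta+2)^i$ agree on every entry $(p,q)$ with $p+q\le 2(N-1)-i$; equivalently, their difference is supported on $\{(p,q):p+q\ge 2N-1-i\}$.

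Combined with the hypothesis that $h_r(\beta)w$ is supported on $q\le N-l$, coordinate $p$ of $\bigl((\beta+2)^i-(\alpha+2)^i\bigr)h_r(\beta)w$ can be nonzero only if some index $q$ satisfies both $p+q\ge 2N-1-i$ and $q\le N-l$, forcing $p\ge N-1+l-i$. For $i\le l-1$ this requires $p\ge N$, which is impossible, so $\mu_iw-u_i=0$, as needed. The main obstacle is spotting the telescoping identity for $\mu_iw-u_i$ and threading Proposition \ref{corner} through the two-step reduction from $\alpha+2$ back to $\beta+2$; once that identity is available the support/degree bookkeeping is mechanical, and the induction then runs cleanly using only Lemma \ref{B} and the matrix determinant lemma.
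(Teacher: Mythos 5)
Your proof is correct and follows essentially the same route as the paper: induction on $i$ via the matrix determinant lemma, with the key step being $\mu_i w=(\alpha+2)^i h_r(\alpha)v$, deduced from Proposition \ref{corner} together with the support hypothesis on $h_r(\beta)w$. Your write-up is in fact slightly more careful than the paper's at one point --- you note that $\alpha+2=(\beta+2)+2vv^\top$ requires \emph{two} applications of Proposition \ref{corner} (through the intermediate tridiagonal matrix $\beta+2+vv^\top$), where the paper invokes the proposition only once.
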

\begin{proof}
By Prop \ref{corner}, $(\alpha+2)^i$ and $(\beta+2)^i$ differ only in the last $i$ columns. It follows from the second hypothesis that $(\beta+2)^ih_r(\beta)w=(\alpha+2)^ih_r(\beta)w$ for $0\le i<l$. Thus
\begin{equation}
\mu_iw=(\alpha+2)^ih_r(\alpha)v
\end{equation}
and
\begin{equation}
\det(\mu_i)w=\adj(\mu_i)(\alpha+2)^ih_r(\alpha)v
\end{equation}
for $0\le i<l$. By \eqref{mui} and the matrix determinant lemma,
\begin{align}
\det(\mu_{i+1})&=\det(\beta+2)\left(\det(\mu_i)+v^\top(\beta+2)^{-1}\adj(\mu_i)(\alpha+2)^ih_r(\alpha)v\right)\\
&=\det(\beta+2)\left(\det(\mu_i)+v^\top(\beta+2)^{-1}\det(\mu_i)w\right).\nonumber
\end{align}
Hence
\begin{equation}
\det(\mu_{i+1})=2\det(\mu_i)\left(1+v^\top(\beta_N+2)^{-1}w\right).
\end{equation}
\end{proof}

\begin{thm}
\begin{align}
&d_{2k+1}((2k+1)n)=(2n+1)^k\\
&d_{2k+1}((2k+1)n+k+1)=(-1)^{k+1\choose 2}4^k(n+1)^k\\
&d_{2k}(2kn)=(-1)^{kn}\\
&d_{2k}(2kn+k)=(-1)^{kn+{k\choose 2}}4^{k-1}(n+1)^{k-1}
\end{align}
\end{thm}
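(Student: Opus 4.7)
The plan is to apply Lemma \ref{main1} to each of the four identities in turn, with a case-specific vector $w$. In every instance one must (i) identify $h_r(\alpha_N)v$, (ii) solve $\mu_0 w = h_r(\alpha_N)v$ for $w$, where $\mu_0 = h_r(\gamma)|_N$, (iii) verify that the last $l-1$ entries of $h_r(\beta_N)w$ vanish, and (iv) evaluate $v^\top(\beta_N+2)^{-1}w$. Combined with $\det\mu_0$ from Lemma \ref{detgka} (odd $r$) or Lemma \ref{detbka} (even $r$), Lemma \ref{main1} then returns $d_r(N) = \det \mu_l$ in closed form. The determinants $\det\mu_0$ are nonzero at our four chosen $N$'s, so $w$ exists and is unique.

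For (i), the entry descriptions in Lemmas \ref{gka} and \ref{detbka} immediately give $g_k(\alpha_N)e_{N-1} = e_{N-1-k}$ in the odd case, and $b_k(\alpha_N)e_{N-1} = e_{N-1-k}+e_{N-k}$ in the even case (when $N>k$). For (ii), the substitution $u_j = (-1)^j w_j$ converts the signed band structure of $\mu_0$ into sliding-window-sum conditions: in the interior $\sum_{j=i-k}^{i+k} u_j = 0$, with a nonhomogeneous term at just one or two rows. These force $(u_j)$ to be periodic with period $2k+1$ (odd $r$) or $2k$ (even $r$) through the bulk of the index range; the boundary rows near $i=0$ and $i=N-1$ then collapse most coordinates of the base period to zero, pinning $w$ down uniquely. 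For each of the four special residues of $N$ the resulting $w$ is supported on a small union of arithmetic progressions and admits an explicit closed form whose nonzero entries are $\pm 1$ (with at most one exceptional entry that may carry a small integer factor in the even-$r$ case).

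Step (iii) is verified by inspection of the support of $w$ against the bandwidth-$k$ support of the relevant rows of $h_r(\beta_N)$ (as given by Lemmas \ref{gka} and \ref{detbka}): the required rows only interact with zero entries of $w$, so the vanishing is automatic. Step (iv) uses the closed form $(\beta_N+2)^{-1}_{N-1,j} = \tfrac{1}{2}(-1)^{N-1+j}(2j+1)$ from Lemma \ref{B} to write $v^\top(\beta_N+2)^{-1}w$ as a finite sum of signed arithmetic progressions over the two branches of the support of $w$; the cross-cancellation between the two branches telescopes the sum to a simple closed form. Substituting this value and $\det\mu_0$ into $\det\mu_l = \det\mu_0 \cdot 2^l (1 + v^\top(\beta_N+2)^{-1}w)^l$ yields the four claimed identities.

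The principal obstacle is Step (ii): producing the explicit $w$ and confirming $\mu_0 w = h_r(\alpha_N)v$. The analysis splits on $N \bmod (2k+1)$ for odd $r$ and on $N \bmod 2k$ for even $r$, and each of the four residue classes produces a different support pattern for $w$, with boundary effects at both ends of the index range requiring separate treatment (and with a slight additional subtlety for even $r$ because $h_r(\alpha_N)v$ then hits two basis vectors instead of one). Once $w$ is in hand, Steps (iii) and (iv) reduce to mechanical finite sums, so the real content of the proof is the explicit identification of $w$ together with the telescoping identity that evaluates the inner product against $(\beta_N+2)^{-1}$.
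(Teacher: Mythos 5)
Your proposal follows exactly the paper's route: apply Lemma \ref{main1} with an explicit kernel-type vector $w$ supported on arithmetic progressions with $\pm 1$ coefficients, verify the hypotheses via the structural lemmas of Section \ref{struct}, and evaluate $v^\top(\beta_N+2)^{-1}w$ with Lemma \ref{B}. The paper simply writes out the four vectors $w_1,\dots,w_4$ explicitly and likewise leaves the remaining verification as routine, so your outline matches its proof in substance.
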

\begin{proof}
Given $w$, it is straightforward to verify the hypotheses and evaluate the final expression of Lemma \ref{main1} with the lemmas of Section \ref{struct}. For the first formula, take $w$ to be the $(2k+1)n$-dimensional column vector
\begin{equation}
w_1=(-1)^{n-1}\left(\sum_{m=0}^{n-1} (-1)^me_{(2k+1)m}-\sum_{m=0}^{n-1} (-1)^me_{(2k+1)m+2k}\right)+e_{N-1},
\end{equation}
where $\{e_i\}_{i=0}^{N-1}$ is the standard basis. Then $g_k(\alpha)w_1=g_k(\beta)w_1=e_{N-k-1}$.

For the second formula, take $w$ to be the $(2k+1)n+k+1$-dimensional column vector
\begin{equation}
w_2=(-1)^{n}\left(\sum_{m=0}^{n} (-1)^me_{(2k+1)m+k-1}-\sum_{m=0}^{n-1} (-1)^me_{(2k+1)m+k+1}\right)+e_{N-1},
\end{equation}
which gives $g_k(\alpha)w_2=e_{N-k-1}+e_{N-k}$ and $g_k(\beta)w_2=e_{N-k-1}-e_{N-k}$.

For the third formula, take $w$ to be the $2kn$-dimensional column vector
\begin{equation}
w_3=(-1)^{n-1}\left(\sum_{m=0}^{n-1} (-1)^me_{2km}-\sum_{m=0}^{n-1} (-1)^me_{2km+2k-1}\right)+e_{N-1},
\end{equation}
which gives $b_k(\alpha)w_3=b_k(\beta)w_3=e_{N-k-1}+e_{N-k}$.

For the fourth formula, take $w$ to be the $2kn+k$-dimensional column vector
\begin{equation}
w_4=(-1)^{n}\left(\sum_{m=0}^{n} (-1)^me_{2km+k-1}-\sum_{m=0}^{n-1} (-1)^me_{2km+k+1}\right)+e_{N-1},
\end{equation}
which gives $b_k(\alpha)w_4=e_{N-k-1}+3e_{N-k}$ and $b_k(\beta)w_4=e_{N-k-1}-e_{N-k}$.
\end{proof}

\subsection{The case that $\mu_0$ is singular}

We will make use of the following fact about the adjugate matrix.

\begin{prop}\label{adj}
The rank of the adjugate $\adj(M)$ of an $n$-by-$n$ matrix $M$ satisfies
\begin{equation}
\rk\adj(M)=\begin{cases}
n&\text{if } \rk M=n\\
1&\text{if } \rk M=n-1\\
0&\text{otherwise}
\end{cases}
\end{equation}
\end{prop}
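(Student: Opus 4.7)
The plan is to split into the three rank regimes and exploit two basic facts simultaneously: the defining identity $M\cdot\adj(M)=\adj(M)\cdot M=\det(M)\,I_n$, and the fact that the entries of $\adj(M)$ are, up to sign, the $(n-1)\times(n-1)$ minors of $M$.

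\medskip

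\noindent\textbf{Case $\rk M=n$.} Here $\det(M)\neq 0$, so $\adj(M)=\det(M)\,M^{-1}$ is a nonzero scalar multiple of an invertible matrix, hence itself invertible. Thus $\rk\adj(M)=n$.

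\medskip

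\noindent\textbf{Case $\rk M\le n-2$.} Every $(n-1)\times(n-1)$ submatrix of $M$ has at most $n-2$ linearly independent rows, so every such submatrix is singular and every $(n-1)\times(n-1)$ minor vanishes. Hence $\adj(M)=0$ and $\rk\adj(M)=0$.

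\medskip

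\noindent\textbf{Case $\rk M=n-1$.} This is the only step requiring a matching upper and lower bound. For the lower bound, $\rk M=n-1$ means some $(n-1)\times(n-1)$ submatrix of $M$ is nonsingular, so at least one entry of $\adj(M)$ is nonzero; thus $\rk\adj(M)\ge 1$. For the upper bound, $\det(M)=0$, so the identity $M\cdot\adj(M)=0$ says that every column of $\adj(M)$ lies in $\ker M$, a subspace of dimension $n-\rk M=1$. Therefore the column space of $\adj(M)$ has dimension at most $1$, so $\rk\adj(M)\le 1$. Combining the two bounds gives $\rk\adj(M)=1$.

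\medskip

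The only subtle point is the rank $n-1$ case, and even there the argument is essentially a three-line sandwich between the cofactor description of $\adj(M)$ and the kernel containment coming from $M\cdot\adj(M)=0$. No new machinery beyond the adjugate identity and the minor definition is needed, so I do not anticipate any real obstacle.
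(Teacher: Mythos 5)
Your proof is correct and follows essentially the same route as the paper's: the invertible case via $\adj(M)=\det(M)M^{-1}$, the rank $\le n-2$ case via vanishing of all order-$(n-1)$ minors, and the rank $n-1$ case by sandwiching between a nonzero cofactor and the containment of the image of $\adj(M)$ in the one-dimensional kernel of $M$. No issues.
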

\begin{proof}
Recall that $\adj(M)\cdot M=\det(M)I$. If $\rk M=n$ then $M$ is invertible with inverse $\frac{1}{\det(M)}\adj(M)$, which also has rank $n$.

If $\rk M=n-1$, then $\det(M)=0$, in which case $\adj(M)$ must send all vectors into the kernel of $M$, which has rank $1$. In this case $M$ also has a nonzero order-$n-1$ minor, so $\adj(M)$ has rank $1$.

If $\rk M\le n-2$, then all order-$n-1$ minors of $M$ are zero, so $\adj(M)=0$.
\end{proof}

\begin{lem}\label{main2}
Suppose there is a nonzero $N$-dimensional column vector $w$ such that $\det(\mu_0)=0$, $\det(\mu_0|_{N-1})\neq 0$, $\mu_0w=0$, $v^\top w=1$, $v^\top(\beta+2)^{-1}w\neq 0$, and entries $N-k-l$ through $N-3$ of $w$ are $0$. Then
\begin{equation}
\det(\mu_l)=\det(\mu_0|_{N-1})\left(2v^\top(\beta_N+2)^{-1}w\right)^l\left(w^\top(\alpha+2)^{l-1}h_r(\alpha)v\right).
\end{equation}
\end{lem}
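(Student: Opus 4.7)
My plan is to adapt the matrix-determinant-lemma argument from Lemma \ref{main1}, accounting for the fact that $\mu_0$ is now singular.

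First I would exploit symmetry: since $\mu_0 = h_r(\gamma)|_N$ by Theorem \ref{abg}, $\mu_0$ is symmetric. The hypotheses $\det(\mu_0) = 0$ and $\det(\mu_0|_{N-1}) \ne 0$ pin the rank of $\mu_0$ at exactly $N-1$, and $\mu_0 w = 0$ with $v^\top w = 1$ tells me that $w$ spans $\ker \mu_0$. By Proposition \ref{adj} and symmetry, $\adj(\mu_0) = c_0 ww^\top$ with $c_0 = \det(\mu_0|_{N-1})$, as seen by comparing the $(N-1, N-1)$ entries. Writing $B = \beta_N + 2$, $x_i = (\alpha_N + 2)^i h_r(\alpha_N) v$, and $\phi_1 = v^\top B^{-1} w$, the recursion \eqref{mui}, the matrix determinant lemma, and the identity $\adj(\mu_i B) = \adj(B)\adj(\mu_i) = 2 B^{-1}\adj(\mu_i)$ (using $\det B = 2$ from Lemma \ref{B}) will give
\begin{equation*}
\det(\mu_{i+1}) = 2\det(\mu_i) + 2 v^\top B^{-1} \adj(\mu_i) x_i.
\end{equation*}
The base case $i=0$ yields $\det(\mu_1) = 2 c_0 \phi_1 (w^\top x_0)$, matching the claim for $l=1$. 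A direct check also gives the auxiliary identity $\mu_1 B^{-1} w = \mu_0 w + (v^\top B^{-1} w) x_0 = \phi_1 x_0$, so $\mu_1^{-1} x_0 = B^{-1} w / \phi_1$ when $\phi_1 \ne 0$.

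For general $l$, I would proceed by induction on $i$ to prove $\det(\mu_i) = c_0 (2\phi_1)^i (w^\top x_{i-1})$ for $1 \le i \le l$. The telescoping requires $v^\top B^{-1} \mu_i^{-1} x_i = \phi_1 (w^\top x_i)/(w^\top x_{i-1}) - 1$ at each step (when $\mu_i$ is invertible, which generic parameters make the case). The strategy is to carry an invariant describing the action of $\mu_i^{-1}$ on the relevant $x_j$, obtained by iterating Sherman--Morrison on the rank-one updates $\mu_{i+1} = \mu_i B + x_i v^\top$. Starting from the explicit description of $\mu_1^{-1} x_0$, one obtains successively more intricate expressions for $\mu_{i+1}^{-1} x_{i+1}$ in terms of $\mu_i^{-1} x_i$, $\mu_i^{-1} x_{i+1}$, and cross terms.

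The main obstacle will be showing that these cross terms are annihilated after applying $v^\top B^{-1}$, so that only the desired scalar ratio survives. This is precisely where the support hypothesis on $w$ (entries $N-k-l$ through $N-3$ vanish) becomes essential. Combined with Proposition \ref{corner}---which forces $(\alpha+2)^j$ and $(\beta+2)^j$ to agree on entries $(p,q)$ with $p+q \le 2(N-1) - j$---and the explicit band structure of $h_r(\alpha_N)$ and $h_r(\beta_N)$ recorded in Section \ref{struct}, this support condition ensures that the discrepancies between the $\alpha$- and $\beta$-pictures applied to $w$ and its $B^{-j}$-translates land in regions killed by the final inner product with $v^\top B^{-1}$. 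Once these cancellations are verified, the telescoping closes and yields the claimed product formula.
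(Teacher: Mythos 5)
Your setup is right---the identification $\adj(\mu_0)=\det(\mu_0|_{N-1})\,ww^\top$, the recursion $\det(\mu_{i+1})=2\det(\mu_i)+2v^\top(\beta+2)^{-1}\adj(\mu_i)x_i$ from the matrix determinant lemma (with $x_i=(\alpha+2)^ih_r(\alpha)v$), and the base case $\det(\mu_1)=2c_0\phi_1(w^\top x_0)$ all match the paper, and your final formula is the correct one. But your inductive mechanism has a genuine gap. You propose to iterate Sherman--Morrison using $\mu_i^{-1}$ and to verify $v^\top B^{-1}\mu_i^{-1}x_i=\phi_1(w^\top x_i)/(w^\top x_{i-1})-1$, waving off invertibility with ``generic parameters make the case.'' There are no free parameters here: the hypotheses force $w^\top x_i=w^\top(\alpha+2)^ih_r(\alpha)v=0$ for all $i\le l-2$ (this is exactly what the support condition on $w$, the tridiagonality of $\alpha+2$, and the band structure of $h_r(\alpha)v$ from Lemmas \ref{gka} and \ref{detbka} deliver), and consequently $\det(\mu_i)=0$ for every $1\le i\le l-1$. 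Each intermediate $\mu_i$ is genuinely singular, $\mu_i^{-1}$ does not exist, and your ratio $(w^\top x_i)/(w^\top x_{i-1})$ is $0/0$. You have also misassigned the role of the support hypothesis: it is not there to kill ``cross terms'' after applying $v^\top B^{-1}$, but to keep every intermediate determinant at zero so that only the very last update contributes.

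The repair is to run the induction on the adjugate rather than the inverse: show $\adj(\mu_i)=c_0\left(2\phi_1\right)^i ww^\top$ for $0\le i<l$. The inductive step needs three observations: (i) $\det(\mu_{i+1})=0$ because $w^\top x_i=0$, so $\adj(\mu_{i+1})$ has rank at most $1$ by Proposition \ref{adj}; (ii) $w^\top\mu_{i+1}=w^\top\mu_i(\beta+2)+(w^\top x_i)v^\top=0$ and $\mu_{i+1}$ is symmetric (a sum of polynomials in $\alpha$ and in $\beta$), so a rank-one adjugate must be a scalar multiple of $ww^\top$; (iii) the update $x_iv^\top$ is supported in the last column and hence does not change the bottom-right cofactor, giving $v^\top\adj(\mu_{i+1})v=v^\top\adj\left(\mu_i(\beta+2)\right)v=c_0\left(2\phi_1\right)^{i+1}$, which pins down the scalar and, being nonzero by hypothesis, shows the rank is exactly $1$. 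A single final application of the matrix determinant lemma then produces the stated product. Your base case and target formula are fine; it is the passage from $i$ to $i+1$ that must go through the adjugate, not the inverse.
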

\begin{proof}
Let $c=\det(\mu_0|_{N-1})$. We will show by induction that
\begin{equation}
\adj(\mu_i)=c\left(2v^\top(\beta_N+2)^{-1}w\right)^iww^\top,
\end{equation}
for $0\le i<l$. For the base case of $i=0$, note that the first two hypotheses imply that $\mu_0$ has rank $N-1$. Since $w$ generates the kernel and $\mu_0$ is symmetric, Proposition \ref{adj} implies that $\adj(\mu_0)$ is a constant $d$ times $ww^\top$. In fact $c=v^\top\adj(\mu_0)v=dv^\top ww^\top v=d$.

Suppose the claim holds for $i$. Since $\alpha+2$ is tridiagonal, the last hypothesis combined with Lemmas \ref{gka} and \ref{detbka} imply that $w^\top(\alpha+2)^ih_r(\alpha)v=0$. By \eqref{mui} and the matrix determinant lemma,
\begin{align}
\det(\mu_{i+1})&=\det(\beta+2)\left(\det(\mu_i)+v^\top(\beta+2)^{-1}\adj(\mu_i)(\alpha+2)^ih_r(\alpha)v\right)\\
&=\det(\beta+2)\left(0+c\left(2v^\top(\beta_N+2)^{-1}w\right)^iv^\top(\beta+2)^{-1}ww^\top(\alpha+2)^ih_r(\alpha)v\right)\nonumber\\
&=0,\nonumber
\end{align}
so $\mu_{i+1}$ has rank at most $n-1$. Since $(\alpha+2)^ih_r(\alpha)vv^\top$ doesn't affect the bottom-right cofactor,
\begin{align}
v^\top\adj(\mu_{i+1})v&=v^\top\adj\left(\mu_i(\beta+2)+ (\alpha+2)^ih_r(\alpha)vv^\top\right)v\nonumber\\
&=v^\top\adj\left(\mu_i(\beta+2)\right)v\\
&=c\det(\beta+2)v^\top (\beta+2)^{-1}\left(2v^\top(\beta_N+2)^{-1}w\right)^iww^\top v\nonumber\\
&=c(2v^\top(\beta_N+2)^{-1}w)^{i+1}.\nonumber
\end{align}
This is nonzero by assumption, so $\adj(\mu_{i+1})$ is nonzero. By Prop \ref{adj}, it is rank $1$. The matrix $\mu_{i+1}$ is symmetric and $w$ lies in its kernel:
\begin{equation}
w^\top\mu_{i+1}=w^\top\mu_i(\beta+2)+w^\top(\alpha+2)^ih_r(\alpha)vv^\top=0+0,
\end{equation}
so it is of the form $\adj(\mu_{i+1})=c(2v^\top(\beta_N+2)^{-1}w)^{i+1}ww^\top$. This completes the induction.

The final $\mu_l$ has determinant
\begin{align}
\det(\mu_l)&=\det(\beta+2)\left(\det(\mu_{l-1})+v^\top(\beta+2)^{-1}\adj(\mu_{l-1})(\alpha+2)^{l-1}h_r(\alpha)v\right)\nonumber\\
&=2\left(0+2^{l-1}c(v^\top(\beta_N+2)^{-1}w)^lw^\top(\alpha+2)^{l-1}h_r(\alpha)v\right)\\
&=c\left(2v^\top(\beta_N+2)^{-1}w\right)^l\left(w^\top(\alpha+2)^{l-1}h_r(\alpha)v\right).\nonumber
\end{align}
\end{proof}

\begin{thm}
\begin{align}
&d_{2k+1}((2k+1)n+1)=(2n+1)^k\\
&d_{2k}(2kn+1)=(-1)^{kn}\\
&d_{2k}(2kn+k+1)=-(-1)^{kn+{k\choose 2}}4^{k-1}(n+1)^{k-1}
\end{align}
\end{thm}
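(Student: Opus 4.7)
The plan is to apply Lemma \ref{main2} to each of the three cases, in direct analogy with how Lemma \ref{main1} was used for the three non-singular cases, but now exploiting that $\mu_0 = h_r(\gamma)|_N$ is singular. The initial data is read off from Theorem \ref{Gamma}: for $d_{2k+1}((2k+1)n+1)$ one has $l = k$ and $h_r = g_k$, while for the two $d_{2k}$ formulas one has $l = k-1$ and $h_r = b_k$. In every case $N$ is one more than a value at which Lemma \ref{detgka} or \ref{detbka} gives a nonzero determinant, so $\det(\mu_0) = 0$ and $\det(\mu_0|_{N-1})$ equals $1$, $(-1)^{kn}$, and $(-1)^{kn+\binom{k}{2}}$ respectively; notice that these are precisely the sign factors appearing in the three claimed formulas.

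Next, for each case I would produce an explicit null vector $w$, patterned on $w_1$, $w_3$, $w_4$ from the previous theorem. The difference is that those vectors were designed to make $h_r(\alpha)w$ and $h_r(\beta)w$ equal nonzero edge vectors; here we want the average to vanish, so the natural guess is a truncated or shifted variant. Concretely, I expect candidates of the shape
\[
w = (-1)^n\!\left(\sum_{m=0}^{n} (-1)^m e_{(2k+1)m} \;-\; \sum_{m=0}^{n-1}(-1)^m e_{(2k+1)m + 2k}\right)
\]
for the first formula, with analogous expressions indexed by the arithmetic progressions $2km$ and $2km+k$ for the other two, chosen so that $v^\top w = 1$ and the support-vanishing condition (entries $N-k-l$ through $N-3$ equal to $0$) holds by inspection. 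The identity $\mu_0 w = 0$ would then be verified directly from the band-plus-corner description of $g_k(\gamma)|_N$ and $b_k(\gamma)|_N$ given by Lemmas \ref{gka} and \ref{detbka}: the alternating signs cause the row sums along each antidiagonal band to telescope.

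The remaining two hypotheses of Lemma \ref{main2} are quantitative. I would use the explicit entries of $(\beta_N+2)^{-1}$ from Lemma \ref{B} to compute $v^\top(\beta_N+2)^{-1}w$ as an alternating sum of the form $\tfrac12(-1)^?(2\min\{i,N-1\}+1)$, which I expect to collapse to $\tfrac{2n+1}{2}$, $\tfrac{1}{2}$, and $\tfrac{n+1}{2}$ in the three cases. The factor $w^\top(\alpha+2)^{l-1}h_r(\alpha)v$ is also local, since $(\alpha+2)^{l-1}h_r(\alpha)v$ is supported in a window of size $O(k)$ near the bottom-right corner by Lemmas \ref{gka}/\ref{detbka}, and $w$ has prescribed support there; this will contribute the remaining powers of $2$ and the sign flip $-1$ in the third formula. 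Plugging these into $\det(\mu_l) = \det(\mu_0|_{N-1})(2v^\top(\beta_N+2)^{-1}w)^l(w^\top(\alpha+2)^{l-1}h_r(\alpha)v)$ should yield exactly the claimed values.

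The main obstacle will be pinning down the correct $w$ in each case and then evaluating $w^\top(\alpha+2)^{l-1}h_r(\alpha)v$, because four constraints must be satisfied simultaneously: $\mu_0 w = 0$, $v^\top w = 1$, $v^\top(\beta+2)^{-1}w \ne 0$, and the support condition near the bottom-right block. The most delicate step is the odd case, where $l-1 = k-1$ smoothing operators must be applied and a Chu--Vandermonde-type binomial identity seems unavoidable in order to reduce the resulting expression to $(2n+1)^k$; the two even cases should be simpler since $l-1 = k-2$ and the $h_r = b_k$ matrix is more sparsely supported.
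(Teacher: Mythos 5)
Your plan is precisely the paper's proof: the authors apply Lemma \ref{main2} with explicit kernel vectors $w_5$, $w_6$, $w_7$ supported on exactly the progressions you name (your displayed vector for the first formula is literally the paper's $w_5$, and the paper's $w_6$, $w_7$ live on $\{2km\}\cup\{2km+2k-1\}$ and $\{2km+k-1\}\cup\{2km+k\}$ respectively), and they too leave the verification of the hypotheses and the evaluation of the final product as a routine check against Section \ref{struct}. For the first two formulas your predicted data are correct: $\det(\mu_0|_{N-1})=1$ resp.\ $(-1)^{kn}$, $v^\top(\beta_N+2)^{-1}w=\tfrac{2n+1}{2}$ resp.\ $\tfrac12$, and the remaining factor $w^\top(\alpha+2)^{l-1}h_r(\alpha)v$ comes out to $1$, so those two cases go through as you describe.

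The third case will not work out as you describe. Since $\det(\mu_0|_{N-1})\neq 0$, the kernel of $\mu_0$ is one-dimensional, so the $w$ with $v^\top w=1$ is unique and $v^\top(\beta_N+2)^{-1}w$ is forced; evaluating it via Lemma \ref{B} on the paper's $w_7$ gives $2k(n+1)$, not your $\tfrac{n+1}{2}$. Moreover the target constant itself is then unreachable: for $k=2$, $n=0$, $N=3$ one has $\mu_0=\gamma^{(2)}_3$, $w=(0,-1,1)^\top$, $v^\top(\beta_3+2)^{-1}w=4$, $w^\top b_2(\alpha_3)v=-1$, whence $\det\mu_1=(-1)\cdot 8\cdot(-1)=8$; this agrees with the value $d_4(3)=8$ in the table of the introduction but not with the stated closed form $-(-1)^{kn+\binom{k}{2}}4^{k-1}(n+1)^{k-1}=4$. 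So your bookkeeping in this case cannot terminate at the claimed constant, and the factor $4^{k-1}$ in the statement needs to be re-examined before the proof can be completed. (Separately, for $k=1$ this case has $l=0$ and $\mu_0=\gamma_{2n+2}$ invertible, so Lemma \ref{main2} does not apply there and one must read the answer off Lemma \ref{detbka} directly.)
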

\begin{proof}
Given $w$, it is straightforward to verify the hypotheses and evaluate the final expression of Lemma \ref{main2} with the lemmas of Section \ref{struct}.

For the first formula, take $w$ to be
\begin{equation}
w_5=(-1)^n\left(\sum_{m=0}^n (-1)^me_{(2k+1)m}-\sum_{m=0}^{n-1} (-1)^me_{(2k+1)m+2k}\right),
\end{equation}
where $\{e_i\}_{i=0}^{N-1}$ is the standard basis.

For the second formula, $w$ to be
\begin{equation}
w_6=(-1)^n\left(\sum_{m=0}^n (-1)^me_{2km}-\sum_{m=0}^{n-1} (-1)^me_{2km+2k-1}\right).
\end{equation}

For the third formula, use
\begin{equation}
w_7=(-1)^{n-1}\left(\sum_{m=0}^n (-1)^me_{2km+k-1}-\sum_{m=0}^{n} (-1)^me_{2km+k}\right).
\end{equation}
\end{proof}

\section{Conjectures}

Let
\begin{equation}
d'_r(n)=\det\left(\frac{r}{2i+2j+r}{2i+2j+r\choose i+j}\right)_{i,j=0}^{n-1}.
\end{equation}
These sequences are considered alongside $d_r(n)$ in \cite{2018arXiv180105608C}. Computer experiments suggest the following conjectures:
\begin{align}
&d'_{2k+1}((2k+1)n)=d'_{2k+1}((2k+1)n+1)=(-1)^{kn},\\
&d'_{2k+1}((2k+1)n+k)=-d'_{2k+1}((2k+1)n+k+2)\\
&\phantom{d'_{2k+1}((2k+1)n+k)}=(-1)^{kn+\binom k 2}((2k+1)(n+1))^{k-1},\\
&d'_{2k+1}((2k+1)n+k+1)=0,\\
&d'_{2k}(kn)=-d'_{2k}(kn+1)=(-1)^{n\binom k 2}(n+1)^{k-1}.
\end{align}
Moreover, it seems that
\begin{align}
&\left(\frac{r}{2i+2j+r}{2i+2j+r\choose i+j}\right)_{i,j\ge 0}\\
&\hspace{60pt}=\begin{cases}
A(-a_k(\gamma)(\gamma+2)^{k-1})A^\top & \text{if } r=2k\\
A((-1)^{k+1}g_k(-\gamma)(\gamma-2)(\gamma+2)^{k-1})A^\top & \text{if } r=2k+1.
\end{cases}\nonumber
\end{align}

\bibliographystyle{plain}
\bibliography{Cigler-Tyson}

\end{document}